\newtheorem{defn}{Definition}[section]
\newtheorem{thm}[defn]{Theorem}
\newtheorem{lem}[defn]{Lemma}
\newtheorem{prop}[defn]{Proposition}
\newtheorem{cor}[defn]{Corollary}
\newtheorem{eg}[defn]{Example}
\newtheorem{re}[defn]{Remark}
\newcommand\relphantom[1]{\mathrel{\phantom{#1}}}
\newcommand{\bdefn}{\begin{defn}}
\newcommand{\edefn}{\end{defn}}
\newcommand{\bthm}{\begin{thm}}
\newcommand{\ethm}{\end{thm}}
\newcommand{\blem}{\begin{lem}}
\newcommand{\elem}{\end{lem}}
\newcommand{\bprop}{\begin{prop}}
\newcommand{\eprop}{\end{prop}}
\newcommand{\bcor}{\begin{cor}}
\newcommand{\ecor}{\end{cor}}
\newcommand{\beg}{\begin{eg}}
\newcommand{\eeg}{\end{eg}}
\newcommand{\bre}{\begin{re}}
\newcommand{\ere}{\end{re}}
\newcommand{\bpf}{\begin{proof}}
\newcommand{\epf}{\end{proof}}
\newcommand{\Chom}{\rm Chom}
\newcommand{\Z}{{\rm Z}}
\newcommand{\QC}{{\rm QC}}
\newcommand{\GDer}{{\rm GDer}}
\newcommand{\ZDer}{{\rm ZDer}}
\newcommand{\Cend}{{\rm Cend}}
\newcommand{\QDer}{{\rm QDer}}
\newcommand{\Der}{{\rm Der}}
\newcommand{\id}{{\rm id}}
\newcommand{\R}{\mathcal{R}}
\newcommand{\A}{\mathscr{A}}
\newcommand{\F}{\mathscr{F}}
\newcommand{\benu}{\begin{enumerate}}
\newcommand{\eenu}{\end{enumerate}}
\newcommand{\bc}{\begin{center}}
\newcommand{\ec}{\end{center}}
\newcommand{\bea}{\begin{eqnarray}}
\newcommand{\eea}{\end{eqnarray}}
\newcommand{\Bea}{\begin{eqnarray*}}
\newcommand{\Eea}{\end{eqnarray*}}
\newcommand{\beq}{\begin{equation}}
\newcommand{\eeq}{\end{equation}}
\newcommand{\Beq}{\begin{equation*}}
\newcommand{\Eeq}{\end{equation*}}
\newcommand{\bspl}{\begin{split}}
\newcommand{\espl}{\end{split}}
\numberwithin{equation}{section}
\begin{document}
\title{{\bf Deformations and generalized derivations of Hom-Lie conformal algebras}}
\author{ Jun Zhao$^{1}$, Lamei Yuan$^{2}$, Liangyun Chen$^{1*}$
 \date{{\small $^{1}$School of Mathematics and Statistics, Northeast Normal
 University,\\
Changchun 130024, China\\
 $^{2}$Department of Mathematics, Harbin Institute of Technology,\\ Harbin 150001, China}}}

\maketitle
\date{}

\begin{abstract}
The purpose of this paper is to extend the cohomology and conformal derivation theories of the classical Lie conformal algebras  to Hom-Lie conformal algebras. In this paper, we develop cohomology theory of Hom-Lie conformal algebras and discuss some applications to the study of deformations of regular Hom-Lie conformal algebras. Also, we introduce $\alpha^k$-derivations of multiplicative Hom-Lie conformal algebras and study their properties.
\bigskip

\noindent {\em Key words:} Hom-Lie conformal algebras, $\alpha^k$-derivations, cohomology, deformations, generalized derivations\\
\noindent {\em Mathematics Subject Classification(2010): 16S70, 17A42, 17B10, 17B56, 17B70}
\end{abstract}
\renewcommand{\thefootnote}{\fnsymbol{footnote}}
\footnote[0]{ Corresponding author(L. Chen): chenly640@nenu.edu.cn.}
\footnote[0]{Supported by NNSF of China (Nos. 11171055, 11471090 and 11301109) and  NSF of  Jilin province (No. 20170101048JC).}

\section{Introduction}
The notion of a Lie conformal algebra encodes an axiomatic description of the operator product expansion of chiral fields in two-dimensional conformal field theory. It has been proved to be an effective tool for the study of infinite-dimensional Lie algebras satisfying the locality property. Besides, vertex operator algebras in \cite{YZ} are closely related to Lie conformal algebras, a vertex operator algebra is an algebraic structure that plays an important role in conformal field theory and string theory. In the last decade, semisimple Lie conformal algebras have been intensively studied. In particular, the cohomology theory was developed in \cite{BKV} and the classification of all finite semisimple Lie conformal algebras were given in \cite{DK}.

Hom-Lie conformal algebras were introduced and studied in \cite{Y}. Lately, similar generalizations of certain algebraic structures became a very popular subject. In \cite{S}, $\alpha^k$-derivations of Hom-Lie algebras were introduced and studied.  In \cite{LCM, ZCM1}, we studied Hom-Nijienhuis operators and $T$*-extensions of Hom-Lie superalgebras and Hom-Jordan Lie algebras, extending the generalized derivation theory of Lie algebras given in \cite{L}. Recently, similar researches were done for Lie conformal algebras in \cite{FHS}. In the present paper, we aim to study generalized derivations of Hom-Lie conformal algebras, and extend the cohomology theory of Lie conformal algebras to the Hom case.

The paper is organized as follows. In Section 2, we recall the notion of a Hom-Lie conformal algebra and then define a module over a Hom-Lie conformal algebra. Moreover, we construct the basic and reduced complexes over a Hom-Lie conformal algebra $\R$ with coefficients in its modules, leading us to the basic and reduced chomologies of $\R$.

In Section 3, we define Hom-Nijienhuis operators of regular Hom-Lie conformal algebras
and show that the deformation generated by a Hom-Nijienhuis operator is trivial.

In Section 4, we study $\alpha^{k}$-derivations of multiplicative Hom-Lie conformal algebras.
Considering the direct sum of the space of $\alpha^k$-derivations, we prove that it is a Hom-Lie conformal algebra. In particular, any $\alpha^1$-derivation gives rise to a derivation extension of a multiplicative Hom-Lie conformal algebra.

In Section 5, we introduce different kinds of generalized derivations of multiplicative Hom-Lie conformal algebras,   and study their properties and connections, extending some results obtained in \cite{L}.

Throughout this paper, all vector
spaces, linear maps, and tensor products are over the complex field $\mathbb{C}$. In addition to the standard
notations $\mathbb{Z}$ and $\mathbb{R}$, we use $\mathbb{Z}_+$ to denote the set of nonnegative integers.
\section{Cohomology of Hom-Lie conformal algebras}

First we present the definition of a Hom-Lie conformal algebra given in \cite{Y}.
\bdefn \rm
A Hom-Lie conformal algebra is a pair $(\R, \alpha)$ in which $\alpha: \R\rightarrow \R$ is a $\mathbb{C}$-linear map satisfying $\alpha\circ\partial=\partial\circ\alpha$, and $\R$ is
 a $\mathbb{C}[\partial]$-module endowed with a $\mathbb{C}$-bilinear map $$[\cdot _\lambda \cdot]:\R\otimes \R\longrightarrow \R[\lambda],\ \  a\otimes b\mapsto [a_\lambda b],$$ called the $\lambda$-bracket, and satisfying the following axioms for $a,b,c\in \R$:
 \begin{eqnarray}
&&{\rm Conformal\ sesquilinearity}:\ \, [\partial a_\lambda b]=-\lambda[a_\lambda b], [a_\lambda\partial b]=(\partial+\lambda)[a_\lambda b];\label{LCF1}\\
 &&{\rm Skew-symmetry}:     \ \ \ \ \ \ \ \ \ \ \ \, [a_\lambda b]=-[b_{-\partial-\lambda}a];\label{LCF2}\\
 &&{\rm Hom\ Jacobi\ identity}:     \ \ \ \ \ \ \ \  \, [\alpha(a)_\lambda[b_\mu c]]=[[a_\lambda b]_{\lambda+\mu}\alpha(c)]+[\alpha(b)_\mu[a_\lambda c]].\label{LCF3}
 \end{eqnarray}
\edefn

As usual in the theory of conformal algebras, the RHS of skew-symmetry means that we have
to take $[b_\mu a]$, expand as a polynomial in $\mu$ with coefficients in $\R$ and then evaluate
$\mu=-\partial-\lambda$ with the corresponding action of $\partial$ in the coefficients.

If we consider the expansion
\begin{eqnarray}
[a_\lambda b]=\mbox{$\sum\limits_{n=0}^{\infty}$} \frac{\lambda^n}{n!} a_{(n)} b,
\end{eqnarray}
the coefficients of $\frac{\lambda^n}{n!}$
are called the $(n)$-products, and the definition can be written in terms of
them (cf. \cite{Y}).

A Hom-Lie conformal algebra $(\R, \alpha)$ is called {\it multiplicative} if $\alpha$ is an algebra endomorphism, i.e.,  $\alpha([a_\lambda b])=[\alpha(a)_\lambda\alpha(b)]$ for any $a,b \in \R$. In particular, if $\alpha$ is an algebra isomorphism, then $(\R, \alpha)$ is called {\it regular}.

In the following, we present a construction of Hom-Lie conformal algebras, extending that of Lie conformal algebras given in \cite{BKV}.

Let $(g,\alpha)$ be a Hom-Lie algebra. A {\it $g$-valued formal distribution} is a series of the form $a(z)=\sum_{n\in\mathbb{Z}}a_n z^{-n-1}$,
where $a_n\in g$ and $z$ is an indeterminate. We denote the space of such distributions by $g[[z,z^{-1}]]$ and the operator $\partial$ on this space by $\partial_z$.
Two $g$-valued formal distributions are called {\it local} if there exists $N\in\mathbb{Z}_+$, such that
\begin{eqnarray*}
(z-w)^N[a(z),b(w)]=0.
\end{eqnarray*}
This is equivalent to saying that one has an expansion of the form:
\begin{eqnarray}
[a(z),b(w)]=\sum_{j=0}^{N-1}(a(w)_{(j)} b(w))\partial_w^{(j)}\delta(z-w),
\end{eqnarray}
where
\begin{eqnarray}a(w)_{(j)}b(w)={\rm Res}_z(z-w)^j[a(z),b(w)]\label{ab}\end{eqnarray}
and $$\delta(z-w)=\sum_{n\in\mathbb{Z}}z^{-n-1}w^n.$$
Let $\F$ be a family of pairwise local $g$-valued formal distributions such that the coefficients of all distributions from $\F$ span $g$. Then the pair $(g,\F)$ is called a {\it formal distribution Hom-Lie algebra}.

Let $\bar{\F}$ denote the minimal subspace of $g[[z.z^{-1}]]$ containing $\F$ which is closed under all $j^{th}$ products \eqref{ab}, $\alpha$-invariant and $\partial$-invariant. One knows that $\bar{\F}$ still consists of pairwise local distributions. Letting
$$[a_\lambda b]=\sum_{n\in\mathbb{Z_+}}\lambda^{(n)}a_{(n)}b,$$
where $\lambda^{(n)}=\lambda^{n}/{n!}$, one endows $\bar{\F}$ with the structure of a Hom-Lie conformal algebra, which is denoted by ${\rm Conf}(g,\F)$.

\bdefn\label{module}\rm
A module $(M,\beta)$ over a Hom-Lie conformal algebra $(\R,\alpha)$ is a $\mathbb{C}[\partial]$-module endowed with a $\mathbb{C}$-linear map $\beta$ and a $\mathbb{C}$-bilinear map $\R\otimes M\longrightarrow M[\lambda]$, $a\otimes v\mapsto a_\lambda v$, such that for $a,b\in \R$, $v\in M$,
 \begin{eqnarray}
&&\alpha(a)_\lambda(b_\mu v)-\alpha(b)_\mu(a_\lambda v)=[a_\lambda b]_{\lambda+\mu}\beta(v),\label{mo1}\\[4pt]
&&(\partial a)_\lambda v=-\lambda(a_\lambda v), \ a_\lambda(\partial v)=(\partial+\lambda)a_\lambda v,\label{mo2}\\[4pt]
&& \beta\circ \partial=\partial\circ \beta,\  \beta(a_\lambda v)=\alpha(a)_\lambda(\beta v).\label{mo3}
 \end{eqnarray}
\edefn

An $\R$-module $(M,\beta)$ is called {\it finite} if it is finitely generated over $\mathbb{C}[\partial]$.

\beg\rm Let $(\R,\alpha)$ be a Hom-Lie conformal algebra.  Then $(\R,\alpha)$ is an $\R$-module under the adjoint diagonal action, namely, $a_\lambda b:=[a_\lambda b]$, $\forall$ $a, b\in \R$.
\eeg
\begin{prop}
Let $(\R,\alpha)$ be a multiplicative Hom-Lie conformal algebra and $(M,\beta)$ an $\R$-module. Define a $\lambda$-bracket $[\cdot_\lambda \cdot]_M$ on $\R \oplus M$ by
\begin{eqnarray*}
[(a+u)_\lambda(b+v)]_M=[a_\lambda b]+a_\lambda v-b_{-\partial-\lambda}u, \ \ \forall\, a,b\in \R,\ u,v\in M.
\end{eqnarray*}
Define $\alpha+\beta:\R\oplus M \rightarrow \R \oplus M$ by
$(\alpha+\beta)(a+u)=\alpha(a)+\beta(u)$.
Then $(\R\oplus M, \alpha+\beta)$ is a multiplicative Hom-Lie conformal algebra.
\end{prop}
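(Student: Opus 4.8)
The plan is to verify directly that the pair $(\R\oplus M,\alpha+\beta)$ satisfies all the axioms in the definition of a (multiplicative) Hom-Lie conformal algebra. First I would check the structural preliminaries: that $\R\oplus M$ is a $\mathbb{C}[\partial]$-module (with $\partial$ acting diagonally), that $\alpha+\beta$ commutes with $\partial$ (immediate from $\alpha\partial=\partial\alpha$ and $\beta\partial=\partial\beta$ in \eqref{mo3}), and that $[\cdot_\lambda\cdot]_M$ takes values in $(\R\oplus M)[\lambda]$ and is $\mathbb{C}$-bilinear. Then I would establish conformal sesquilinearity \eqref{LCF1} for $[\cdot_\lambda\cdot]_M$ by splitting into the $\R$-component, which uses \eqref{LCF1} for $\R$, and the $M$-components, which use \eqref{mo2}; one has to be slightly careful with the term $b_{-\partial-\lambda}u$, where replacing $a$ by $\partial a$ or $b$ by $\partial b$ interacts with the formal substitution $\mu\mapsto-\partial-\lambda$, but this is the standard bookkeeping already flagged in the remark after the definition.

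Next I would verify skew-symmetry \eqref{LCF2}. Computing $[(b+v)_{-\partial-\lambda}(a+u)]_M = [b_{-\partial-\lambda}a] + b_{-\partial-\lambda}u - a_{-\partial-(-\partial-\lambda)}v = [b_{-\partial-\lambda}a] + b_{-\partial-\lambda}u - a_\lambda v$, and using skew-symmetry of $[\cdot_\lambda\cdot]$ in $\R$, this equals $-\big([a_\lambda b] + a_\lambda v - b_{-\partial-\lambda}u\big) = -[(a+u)_\lambda(b+v)]_M$, as required. Then I would check multiplicativity: $(\alpha+\beta)\big([(a+u)_\lambda(b+v)]_M\big) = \alpha([a_\lambda b]) + \beta(a_\lambda v) - \beta(b_{-\partial-\lambda}u)$, and using $\alpha([a_\lambda b])=[\alpha(a)_\lambda\alpha(b)]$ together with $\beta(a_\lambda v)=\alpha(a)_\lambda\beta(v)$ from \eqref{mo3}, this equals $[(\alpha(a)+\beta(u))_\lambda(\alpha(b)+\beta(v))]_M = [((\alpha+\beta)(a+u))_\lambda((\alpha+\beta)(b+v))]_M$.

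The main work, and the step I expect to be the chief obstacle, is the Hom Jacobi identity \eqref{LCF3} for $[\cdot_\lambda\cdot]_M$. I would expand $[(\alpha+\beta)(a+u)_\lambda[(b+v)_\mu(c+w)]_M]_M$, the analogous term with $a$ and $b$ swapped, and $[[(a+u)_\lambda(b+v)]_M{}_{\lambda+\mu}(\alpha+\beta)(c+w)]_M$, collecting terms by which of $u,v,w$ they involve. The purely $\R$-valued part reduces to the Hom Jacobi identity \eqref{LCF3} in $\R$. The terms linear in $w$ reduce to the module axiom \eqref{mo1} applied to $c$'s slot, while the terms linear in $u$ and in $v$ each reduce, after using skew-symmetry and \eqref{mo3}, to \eqref{mo1} again (with appropriate relabelling and the substitution rule for the $-\partial-\lambda$ arguments). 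The only genuinely delicate point is tracking the formal-variable substitutions in the $b_{-\partial-\lambda}u$-type summands — in particular ensuring that the action of $\partial$ inside those coefficients is applied consistently — but once the terms are grouped correctly, each group collapses to an instance of an axiom already assumed, so no new identity is needed.
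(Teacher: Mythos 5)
Your proposal is correct and follows essentially the same route as the paper: a direct axiom-by-axiom verification in which sesquilinearity and skew-symmetry reduce to \eqref{LCF1}, \eqref{LCF2} and \eqref{mo2}, multiplicativity to \eqref{mo3}, and the Hom Jacobi identity — after grouping terms by which of $u,v,w$ they involve — collapses to instances of the module axiom \eqref{mo1} under the substitution $\mu\mapsto-\lambda-\mu'-\partial$. The paper carries out exactly the reduction you describe, isolating the single identity $\alpha(a)_\lambda (b_{-\partial-\mu'}w)-\alpha(b)_{-\partial-\mu'-\lambda}(a_{-\partial-\mu'}w)=[a_\lambda b]_{-\partial-\mu'}(\beta w)$ as the delicate case.
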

\begin{proof}
 Note that $\R \oplus M$ is equipped with a $\mathbb{C}[\partial]$-module structure via
 \begin{eqnarray*}
 \partial(a+u)=\partial(a)+\partial(u),\ \,a\in \R,\, u\in M.
  \end{eqnarray*}
 With this, it is easy to see that $(\alpha+\beta)\circ\partial=\partial\circ(\alpha+\beta)$ and $(\alpha+\beta)\big([(a+u)_\lambda (b+v)]_M\big)=[((\alpha+\beta)(a+u))_\lambda(\alpha+\beta)(b+v)]_M$, $\forall$ $a,b\in \R$, $u,v\in M$.
A direct computation shows that
\begin{eqnarray*}
[\partial(a+u)_\lambda(b+v)]_M
&=&[(\partial a+\partial u)_\lambda(b+v)]_M
=[\partial a_\lambda b]+(\partial a)_\lambda v-b_{-\partial-\lambda}\partial u\\
&=&-\lambda[a_\lambda b]-\lambda a_\lambda v-(\partial-\lambda-\partial)b_{-\partial-\lambda}u\\
&=&-\lambda([a_\lambda b]+a_\lambda v-b_{-\partial-\lambda}u)\\&=&-\lambda[(a+u)_\lambda(b+v)]_M,\\
{[(a+u)_\lambda\partial(b+v)]_M}
&=&[(a+u)_\lambda (\partial b+\partial v)]_M
=[a_\lambda\partial b]+a_\lambda\partial v-(\partial b)_{-\partial-\lambda}u\\
&=&(\partial+\lambda)[a_\lambda b]+(\partial+\lambda)a_\lambda v-(\partial+\lambda)b_{-\partial-\lambda}u\\
&=&(\partial+\lambda)([a_\lambda b]+a_\lambda v-b_{-\partial-\lambda}u)\\
&=&(\partial+\lambda)[(a+u)_\lambda(b+v)]_M.
\end{eqnarray*}
Thus \eqref{LCF1} holds. \eqref{LCF2} follows from
\begin{eqnarray*}
[(b+v)_{-\partial-\lambda}(a+u)]_M
&=&[b_{-\partial-\lambda}a]+b_{-\partial-\lambda}u-a_\lambda v
=-[a_\lambda b]-a_\lambda v+b_{-\partial-\lambda}u\\
&=&-[(a+u)_\lambda(b+v)]_M.
\end{eqnarray*}

To check the Hom Jacobi identity, we compute
\begin{eqnarray}
&&[(\alpha+\beta)(a+u)_\lambda[(b+v)_\mu(c+w)]_M]_M\nonumber\\
&=&[(\alpha(a)+\beta(u))_\lambda[(b+v)_\mu(c+w)]_M]_M\nonumber\\
&=&[(\alpha(a)+\beta(u))_\lambda([b_\mu c]+b_\mu w-c_{-\partial-\mu}v)]_M\nonumber\\
&=&[\alpha(a)_\lambda[b_\mu c]]+\alpha(a)_\lambda (b_\mu w)-\alpha(a)_\lambda(c_{-\partial-\mu}v)-[b_\mu c]_{-\partial-\lambda}(\beta u).\label{1}\\[4pt]
&&[(\alpha+\beta)(b+v)_\mu[(a+u)_\lambda(c+w)]_M]_M\nonumber\\
&=&[\alpha(b)_\mu[a_\lambda c]]+\alpha(b)_\mu (a_\lambda w)-\alpha(b)_\mu(c_{-\partial-\lambda}u)-[a_\lambda c]_{-\partial-\mu}(\beta v).\label{2}\\[4pt]
&&[{[(a+u)_\lambda(b+v)]_M}_{(\lambda+\mu)}(\alpha+\beta)(c+w)]_M\nonumber\\
&=&[([a_\lambda b]+a_\lambda v-b_{-\partial-\lambda}u)_{\lambda+\mu}(\alpha+\beta)(c+w)]_M\nonumber\\
&=&[[a_\lambda b]_{\lambda+\mu}\alpha(c)]+[a_\lambda b]_{\lambda+\mu}(\beta w)-\alpha(c)_{-\partial-\lambda-\mu}(a_\lambda v)
+\alpha(c)_{-\partial-\lambda-\mu}(b_{-\partial-\lambda}u).\label{3}
\end{eqnarray}
By \eqref{1}--\eqref{3}, we only need to show that
\begin{eqnarray}
\alpha(a)_\lambda (b_{-\partial-\mu^{'}}w)-\alpha(b)_{-\partial-\mu^{'}-\lambda}(a_{-\partial-\mu^{'}}w)=[a_\lambda b]_{-\partial-\mu^{'}}(\beta w).\label{represen3}
\end{eqnarray}
Since $(M,\beta)$ is an $\R$-module,
\begin{eqnarray}
\alpha(a)_\lambda (b_\mu w)-\alpha(b)_\mu (a_\lambda w)=[a_\lambda b]_{\lambda+\mu}(\beta w).\label{represen1}
\end{eqnarray}
Replacing $\mu$ by $-\lambda-\mu^{'}-\partial$ in (\ref{represen1}) and using \eqref{mo2}, we obtain
\begin{eqnarray}
\alpha(a)_\lambda (b_{-\partial-\mu^{'}}w)-\alpha(b)_{-\partial-\mu^{'}-\lambda}(a_\lambda w)=[a_\lambda b]_{-\partial-\mu^{'}}(\beta w).\label{represen2}
\end{eqnarray}
By \eqref{mo2} again, (\ref{represen2}) is equivalent to \eqref{represen3}. This implies
\begin{eqnarray*}
&&[(\alpha+\beta)(a+u)_\lambda[(b+v)_\mu(c+w)]_M]_M\\
&=&[{[(a+u)_\lambda(b+v)]_M}_{\lambda+\mu}(\alpha+\beta)(c+w)]_M+
[(\alpha+\beta)(b+v)_\mu[(a+u)_\lambda(c+w)]_M]_M.
\end{eqnarray*}
Therefore $(\R\oplus M, \alpha+\beta)$ is a multiplicative Hom-Lie conformal algebra.
\end{proof}
In the following we aim to develop cohomology theory of Hom-Lie conformal algebras. To do this, we need the following concept.

\bdefn\rm
An $n$-cochain ($n\in \mathbb{Z}_+$) of a multiplicative Hom-Lie conformal algebra $(\R,\alpha)$ with coefficients in a module $(M,\beta)$ is a $\mathbb{C}$-linear map
\begin{eqnarray*}
\gamma:\R^{\otimes n}&\rightarrow& M[\lambda_{1},\cdots,\lambda_{n}],\\
a_1\otimes\cdots\otimes a_n&\mapsto& \gamma_{\lambda_{1},\cdots,\lambda_{n}}(a_1,\cdots,a_n),
\end{eqnarray*}
where $M[\lambda_{1},\cdots,\lambda_{n}]$ denotes the space of polynomials with coefficients in $M$, satisfying the following conditions:\\
Conformal antilinearity: $\gamma_{\lambda_{1},\cdots,\lambda_{n}}(a_1,\cdots,\partial a_i,\cdots,a_n) =-\lambda_{i}\gamma_{\lambda_{1},\cdots,\lambda_{n}}(a_1,\cdots,a_i,\cdots,a_n);$

\noindent Skew-symmetry: $\gamma$ is skew-symmetric with respect to simultaneous permutations of $a_i$'s and $\lambda_i$'s; 

\noindent Commutativity: $\beta\circ\gamma=\gamma\circ\alpha,$ which holds in the sense that
\begin{eqnarray*}
\beta(\gamma(a_1\otimes\cdots\otimes a_n))=\gamma(\alpha(a_1)\otimes\cdots\otimes \alpha(a_n)).
\end{eqnarray*}
\edefn

Let $\R^{\otimes0}=\mathbb{C}$ as usual, so that a $0$-cochain $\gamma$ is an element of $M$. Denote by $\alpha^k$ the $k$-times composition of $\alpha$. Define a differential ${{\rm{\bf d}}}$ of a cochain $\gamma$ by
\begin{eqnarray*}
&&({{\rm{\bf d}}}\gamma)_{\lambda_{1},\cdots,\lambda_{n+1}}(a_1,\cdots,a_{n+1})\\
&=&\mbox{$\sum\limits_{i=1}^{n+1}$}(-1)^{i+1}\alpha^{n}(a_i)_{\lambda_i}\gamma_{\lambda_{1},\cdots,\hat{\lambda_i},
\cdots,\lambda_{n+1}}(a_1,\cdots,\hat{a_i},\cdots,a_{n+1})\\
&&+\mbox{$\sum\limits_{1\leq i<j}^{n+1}(-1)^{i+j}$}\gamma_{\lambda_i+\lambda_j,\lambda_1,\cdots,\hat{\lambda_i},\cdots,\hat{\lambda}_j,\cdots,\lambda_{n+1}}
([{a_i}_{\lambda_i}a_j],\alpha(a_1),\cdots,\hat{a}_i,\cdots,\hat{a}_j,\cdots,\alpha(a_{n+1})),
\end{eqnarray*}
where $\gamma$ is extended linearly over the polynomials in $\lambda_i$. In particular, if $\gamma$ is a $0$-cochain, then $({{\rm{\bf d}}}\gamma)_\lambda a=a_\lambda \gamma$.
\begin{re}\rm
Conformal antilinearity implies the following relation for an $n$-cochain $\gamma$:
$$\gamma_{\lambda+\mu,\lambda_1,\cdots}([a_\lambda b],a_1,\cdots)=\gamma_{\lambda+\mu,\lambda_1,\cdots}([a_{-\partial-\mu} b],a_1,\cdots).$$
\end{re}
\begin{prop} ${{\rm{\bf d}}}\gamma$ is a cochain and ${{\rm{\bf d}}}^{2}=0$.
\end{prop}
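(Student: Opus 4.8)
The plan is to verify two separate things: first, that $\mathbf{d}\gamma$ again satisfies the three defining properties of an $n+1$-cochain (conformal antilinearity, skew-symmetry, and commutativity with the structure maps), and second, that $\mathbf{d}^2\gamma = 0$. For the first part, conformal antilinearity of $\mathbf{d}\gamma$ in each slot $a_k$ is checked by substituting $\partial a_k$ and using conformal sesquilinearity \eqref{LCF1}, the module axiom \eqref{mo2}, and the conformal antilinearity of $\gamma$ itself; one must be slightly careful with the term where $k=i$ or $k=j$ in the double sum, where $\partial a_k$ appears inside a bracket $[{a_i}_{\lambda_i}a_j]$ and one uses $[\partial a_{\lambda} b] = -\lambda[a_\lambda b]$ together with the Remark to move the $\partial$ around. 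Skew-symmetry under simultaneous transposition of adjacent pairs $(a_k,\lambda_k) \leftrightarrow (a_{k+1},\lambda_{k+1})$ follows from a bookkeeping argument: the sign $(-1)^{i+1}$ and $(-1)^{i+j}$ prefactors together with the induced skew-symmetry of $\gamma$ produce exactly the sign flip, and the $i<j$ restriction in the double sum is handled by noting that the transposition merely relabels which index is smaller. Commutativity $\beta\circ(\mathbf{d}\gamma) = (\mathbf{d}\gamma)\circ\alpha$ uses multiplicativity of $\alpha$ (so $\alpha$ commutes with brackets), the commutativity $\beta\gamma = \gamma\alpha$ for $\gamma$, the module relation $\beta(a_\lambda v) = \alpha(a)_\lambda \beta(v)$ from \eqref{mo3}, and the fact that $\alpha$ commutes with $\alpha^n$.

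For the core identity $\mathbf{d}^2 = 0$, I would expand $(\mathbf{d}(\mathbf{d}\gamma))_{\lambda_1,\dots,\lambda_{n+2}}(a_1,\dots,a_{n+2})$ fully. The result is a sum of four types of terms, according to whether each of the two applications of $\mathbf{d}$ contributes a "module-action" term (the first sum) or a "bracket" term (the second sum): namely (module, module), (module, bracket), (bracket, module), and (bracket, bracket). The strategy is the classical one for Chevalley--Eilenberg-type differentials: the (module, module) terms cancel in pairs using the Hom Jacobi identity for the module action \eqref{mo1}, i.e. $\alpha(a)_\lambda(b_\mu v) - \alpha(b)_\mu(a_\lambda v) = [a_\lambda b]_{\lambda+\mu}\beta(v)$; the two "mixed" families (module, bracket) and (bracket, module) cancel against each other after reindexing, where the appearance of $\alpha^n(a_i)$ versus $\alpha^{n-1}$ applied after one more $\alpha$, together with multiplicativity $\alpha([a_\lambda b]) = [\alpha(a)_\lambda \alpha(b)]$, makes the powers of $\alpha$ match up; and the (bracket, bracket) terms cancel using the Hom Jacobi identity \eqref{LCF3} for $\R$ itself, grouped over triples of indices. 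Throughout, the Remark is needed to reconcile expressions like $\gamma_{\lambda_i+\lambda_j,\dots}([{a_i}_{\lambda_i}a_j],\dots)$ with their $-\partial-\lambda_j$ counterparts so that terms coming from the $i<j$ ordering can be matched with terms where the roles of $i$ and $j$ are swapped.

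The main obstacle is the careful tracking of signs and of the powers of $\alpha$ across the four families of terms — in particular making sure that in the cross-cancellation of the (module, bracket) and (bracket, module) families the exponent on $\alpha$ is consistently $n$ at each stage (one application of $\mathbf{d}$ raises the cochain degree by one and the exponent in the module-term from $n$ to... correspondingly), and that the extra $\alpha$'s inserted on the surviving arguments in the bracket-term line up with the $\alpha^n$ prefactor after using multiplicativity. This is exactly the place where the Hom-version departs from the classical Lie conformal computation of \cite{BKV}, so it deserves the most care; the actual algebraic identities invoked (Hom Jacobi for $\R$ and for $M$, sesquilinearity, and the module axioms) are each applied in a single clean step once the terms are correctly paired. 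I would organize the write-up by first stating the expansion schematically, then treating the four cancellations as four short lemmas-within-the-proof.
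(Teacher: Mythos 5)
Your overall strategy is the same as the paper's: expand $(\mathbf{d}^{2}\gamma)$ into families according to whether each application of $\mathbf{d}$ contributes a module-action term or a bracket term, and cancel family against family using sesquilinearity, the module axioms, multiplicativity, the Hom Jacobi identity, and skew-symmetry of $\gamma$. Your treatment of the cochain property also matches (the paper simply refers to \cite{BKV} for antilinearity and skew-symmetry and notes that commutativity is immediate). However, two of your cancellation assignments are wrong as stated and would derail a literal execution of the plan.

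First, the (module,\,module) terms do \emph{not} cancel in pairs among themselves: identity \eqref{mo1} has the nonzero right-hand side $[{a_i}_{\lambda_i}a_j]_{\lambda_i+\lambda_j}\beta(v)$, so pairing $\alpha^{n+1}(a_i)_{\lambda_i}\bigl(\alpha^{n}(a_j)_{\lambda_j}\gamma(\cdots)\bigr)$ with its $i\leftrightarrow j$ partner leaves a residue of bracket-action terms. These residues are absorbed by the sub-family of your (bracket,\,module) terms in which the newly formed bracket $[{a_i}_{\lambda_i}a_j]$ is itself the element acting on $\gamma(\cdots)$ (the paper's $(^*9)$), using $\beta\circ\gamma=\gamma\circ\alpha$ to reconcile $\beta(\gamma(\cdots))$ with $\gamma(\alpha(\cdots))$. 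Consequently that sub-family must be excluded from your ``mixed families cancel against each other by reindexing'' step; only the remaining (module,\,bracket)/(bracket,\,module) pairs cancel by pure reindexing. Second, the (bracket,\,bracket) family splits into two parts: the nested-bracket terms $[[{a_i}_{\lambda_i}a_j]_{\lambda_i+\lambda_j}\alpha(a_k)]$ indexed by triples, which vanish by the Hom Jacobi identity \eqref{LCF3} as you say, and the terms indexed by four distinct indices in which two \emph{separate} brackets occupy the first two slots of $\gamma$; the latter vanish by skew-symmetry of $\gamma$ (transposing those two slots reverses the sign), not by Hom Jacobi, and your plan omits them entirely. Both defects are repairable without changing the architecture of the argument, but as written the bookkeeping does not close.
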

\begin{proof} Let $\gamma$ be an $n$-cochain. As discussed in the proof of \cite[Lemma 2.1]{BKV},
${{\rm{\bf d}}}\gamma$ satisfies conformal antilinearity and skew-symmetry. Commutativity is obviously satisfied. Thus
${{\rm{\bf d}}}\gamma$ is an $(n+1)$-cochain.

A straightforward computation shows that
\begin{align}
&({{\rm{\bf d}}}^{2}\gamma)_{\lambda_{1},\cdots,\lambda_{n+2}}(a_1,\cdots,a_{n+2})\notag\\
=&\sum_{i=1}^{n+2}(-1)^{i+1}\alpha^{n+1}(a_i)_{\lambda_i}({{\rm{\bf d}}}\gamma)_{\lambda_{1},\cdots,\hat{\lambda}_i,
\cdots,\lambda_{n+2}}(a_1,\cdots,\hat{a}_i,\cdots,a_{n+2})\notag\\
&+\sum_{1\leq i<j}^{n+2}(-1)^{i+j}({{\rm{\bf d}}}\gamma)_{\lambda_i+\lambda_j,\lambda_1,\cdots,\hat{\lambda}_{i,j},\cdots,\lambda_{n+2}}
([{a_i}_{\lambda_i}a_j],\alpha(a_1),\cdots,\hat{a}_{i,j},\cdots,\alpha(a_{n+2}))\notag\\
=&\sum_{i=1}^{n+2}\sum_{j=1}^{i-1}(-1)^{i+j}\alpha^{n+1}(a_i)_{\lambda_i}
(\alpha^n(a_j)_{\lambda_j}\gamma_{\lambda_{1},\cdots,\hat{\lambda}_{j,i},
\cdots,\lambda_{n+2}}(a_1,\cdots,\hat{a}_{j,i},\cdots,a_{n+2}))\tag{*1}\\
&+\sum_{i=1}^{n+2}\sum_{j=i+1}^{n+2}(-1)^{i+j+1}\alpha^{n+1}(a_i)_{\lambda_i}
(\alpha^n(a_j)_{\lambda_j}\gamma_{\lambda_{1},\cdots,\hat{\lambda}_{i,j},
\cdots,\lambda_{n+2}}(a_1,\cdots,\hat{a}_{i,j},\cdots,a_{n+2}))\tag{*2}\\
&+\sum_{i=1}^{n+2}\sum_{1\leq j<k<i}^{n+2}(-1)^{i+j+k+1}\alpha^{n+1}(a_i)_{\lambda_i}
\gamma_{\lambda_j+\lambda_k,\lambda_1,\cdots,\hat{\lambda}_{j,k,i},\cdots,\lambda_{n+2}}\notag\\
&([{a_j}_{\lambda_j}a_k],\alpha(a_1),\cdots,\hat{a}_{j,k,i},\cdots,\alpha(a_{n+2}))\tag{*3}\\
&+\sum_{i=1}^{n+2}\sum_{1\leq j<i<k}^{n+2}(-1)^{i+j+k}\alpha^{n+1}(a_i)_{\lambda_i}
\gamma_{\lambda_j+\lambda_k,\lambda_1,\cdots,\hat{\lambda}_{j,i,k},\cdots,\lambda_{n+2}}\notag\\
&([{a_j}_{\lambda_j}a_k],\alpha(a_1),\cdots,\hat{a}_{j,i,k},\cdots,\alpha(a_{n+2}))\tag{*4}\\
&+\sum_{i=1}^{n+2}\sum_{1\leq i<j<k}^{n+2}(-1)^{i+j+k+1}\alpha^{n+1}(a_i)_{\lambda_i}
\gamma_{\lambda_j+\lambda_k,\lambda_1,\cdots,\hat{\lambda}_{i,j,k},\cdots,\lambda_{n+2}}\notag\\
&([{a_j}_{\lambda_j}a_k],\alpha(a_1),\cdots,\hat{a}_{i,j,k},\cdots,\alpha(a_{n+2}))\tag{*5}\\
&+\sum_{1\leq i<j}^{n+2}\sum_{k=1}^{i-1}(-1)^{i+j+k}\alpha^{n+1}(a_k)_{\lambda_k}
\gamma_{\lambda_i+\lambda_j,\lambda_1,\cdots,\hat{\lambda}_{k,i,j},\cdots,\lambda_{n+2}}\notag\\
&([{a_i}_{\lambda_i}a_j],\alpha(a_1),\cdots,\hat{a}_{k,i,j},\cdots,\alpha(a_{n+2}))\tag{*6}\\
&+\sum_{1\leq i<j}^{n+2}\sum_{k=i+1}^{j-1}(-1)^{i+j+k+1}\alpha^{n+1}(a_k)_{\lambda_k}
\gamma_{\lambda_i+\lambda_j,\lambda_1,\cdots,\hat{\lambda}_{i,k,j},\cdots,\lambda_{n+2}}\notag\\
&([{a_i}_{\lambda_i}a_j],\alpha(a_1),\cdots,\hat{a}_{i,k,j},\cdots,\alpha(a_{n+2}))\tag{*7}\\
&+\sum_{1\leq i<j}^{n+2}\sum_{k=j+1}^{n+2}(-1)^{i+j+k}\alpha^{n+1}(a_k)_{\lambda_k}
\gamma_{\lambda_i+\lambda_j,\lambda_1,\cdots,\hat{\lambda}_{i,j,k},\cdots,\lambda_{n+2}}\notag\\
&([{a_i}_{\lambda_i}a_j],\alpha(a_1),\cdots,\hat{a}_{i,j,k},\cdots,\alpha(a_{n+2}))\tag{*8}\\
&+\sum_{1\leq i<j}^{n+2}(-1)^{i+j}\alpha^{n}([{a_i}_{\lambda_i}a_j])_{\lambda_i+\lambda_j}
\gamma_{\lambda_{1},\cdots,\hat{\lambda}_j,\cdots,\hat{\lambda}_i,
\cdots,\lambda_{n+2}}(\alpha(a_1),\cdots,\hat{a}_j,\cdots,\hat{a}_i,\cdots,\alpha(a_{n+2}))\tag{*9}\\
&+\sum_{distinct\,i,j,k,l,i<j,k<l}^{n+2}(-1)^{i+j+k+l}sign\{i,j,k,l\}\notag\\
&\gamma_{\lambda_k+\lambda_l,\lambda_i+\lambda_j,\lambda_1,\cdots,\hat{\lambda}_{i,j,k,l},\cdots,\lambda_{n+2}}
(\alpha([{a_k}_{\lambda_k}a_l]),\alpha([{a_i}_{\lambda_i}a_j]),\alpha^{2}(a_1),\cdots,\hat{a}_{i,j,k,l},
\cdots,\alpha^{2}(a_{n+2}))\tag{*10}\\
&+\sum_{i,j,k=1,i<j,k\neq i,j}^{n+2}(-1)^{i+j+k+1}sign\{i,j,k\}\notag\\
&\gamma_{\lambda_i+\lambda_j+\lambda_k,\lambda_1,\cdots,\hat{\lambda}_{i,j,k},\cdots,\lambda_{n+2}}
([[{a_i}_{\lambda_i}a_j]_{\lambda_i+\lambda_j}\alpha(a_k)],\alpha^{2}(a_1),\cdots,\hat{a}_{i,j,k},
\cdots,\alpha^{2}(a_{n+2}))\tag{*11},
\end{align}
where $sign\{i_1,\cdots,i_p\}$ is the sign of the permutation putting the indices in increasing order and $\hat{a}_{i,j,\cdots}$
means that $a_i,a_j,\cdots$ are omitted.

It is obvious that $(^*3)$ and $(^*8)$ summations cancel each other. The same is true for $(^*4)$ and $(^*7)$, $(^*5)$ and $(^*6)$. The Hom Jacobi identity implies $(^*11)=0$, whereas skew-symmetry of $\gamma$ gives $(^*10)=0$. As $M$ is an $\R$-module,
\begin{eqnarray*}
-\alpha(a_i)_{\lambda_i}({a_j}_{\lambda_j}m)+\alpha(a_j)_{\lambda_j}({a_i}_{\lambda_i}m)
+[{a_i}_{\lambda_i}a_j]_{\lambda_i+\lambda_j}(\beta m)=0.
\end{eqnarray*}
By $\beta\circ\gamma=\gamma\circ\alpha,$ $(^*1)$, $(^*2)$ and $(^*9)$ summations cancel.  This proves ${{\rm{\bf d}}}^{2}\gamma=0$.
\end{proof}

Thus the cochains of a multiplicative Hom-Lie conformal algebra $\R$ with coefficients in a module $M$ form a comlex, which is denoted by  $$\widetilde{C}^{\bullet}_{\alpha}=\widetilde{C}^{\bullet}_{\alpha}(\R,M)
=\bigoplus_{n\in\mathbb{Z}_{+}}\widetilde{C}^{n}_{\alpha}(\R,M).$$
This complex is called the \emph{basic complex} for the $\R$-module $(M,\beta)$. 
Moreover, define a (left) $\mathbb{C}[\partial]$-module structure on $\widetilde{C}^{\bullet}_{\alpha}$ by
\begin{eqnarray}
(\partial\gamma)_{\lambda_1,\cdots,\lambda_n}(a_1,\cdots,a_n)=
(\partial_M+\sum_{i=1}^{n}\lambda_i)\gamma_{\lambda_1,\cdots,\lambda_n}(a_1,\cdots,a_n).
\end{eqnarray}
\begin{lem} ${{\rm{\bf d}}}\partial=\partial {{\rm{\bf d}}}$, and therefore the graded subspace $\partial\widetilde{C}^{\bullet}_{\alpha}\subset\widetilde{C}^{\bullet}_{\alpha}$ forms a subcomplex.
\end{lem}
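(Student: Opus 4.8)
The plan is to prove the operator identity $\mathbf{d}\partial=\partial\mathbf{d}$ on each space $\widetilde{C}^{n}_{\alpha}(\R,M)$ by evaluating both sides on an arbitrary $n$-cochain $\gamma$ and arbitrary arguments $a_1,\dots,a_{n+1}$, and then to read off the subcomplex statement as a formal consequence.

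First I would observe that $\partial\gamma$ is again an $n$-cochain, so that $\mathbf{d}(\partial\gamma)$ is defined: conformal antilinearity and skew-symmetry of $\partial\gamma$ follow at once from those of $\gamma$ together with the fact that $\partial_M+\sum_i\lambda_i$ is symmetric in the $\lambda_i$ and is scaled by $-\lambda_i$ when $a_i$ is replaced by $\partial a_i$, while the commutativity $\beta\circ(\partial\gamma)=(\partial\gamma)\circ\alpha$ uses $\beta\circ\partial=\partial\circ\beta$ from \eqref{mo3}. Likewise $\mathbf{d}\gamma$ is an $(n+1)$-cochain by the preceding proposition, so both $\mathbf{d}(\partial\gamma)$ and $\partial(\mathbf{d}\gamma)$ are legitimate $(n+1)$-cochains and it remains to compare them.

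The core step is to expand $(\mathbf{d}(\partial\gamma))_{\lambda_1,\dots,\lambda_{n+1}}(a_1,\dots,a_{n+1})$ from the definition of $\mathbf{d}$ and substitute $(\partial\gamma)_{\mu_1,\dots,\mu_n}(\cdots)=\big(\partial_M+\sum_k\mu_k\big)\gamma_{\mu_1,\dots,\mu_n}(\cdots)$ into each term. In the $i$-th summand of the first sum, the variables carried by $\partial\gamma$ are $\{\lambda_k:k\neq i\}$, so a factor $\partial_M+\sum_{k\neq i}\lambda_k$ sits inside the action $\alpha^{n}(a_i)_{\lambda_i}$; moving $\alpha^{n}(a_i)_{\lambda_i}$ past $\partial_M$ via the module axiom $a_\lambda(\partial v)=(\partial+\lambda)a_\lambda v$ from \eqref{mo2} converts this into the outer factor $\partial_M+\lambda_i+\sum_{k\neq i}\lambda_k=\partial_M+\sum_{k=1}^{n+1}\lambda_k$. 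In the summand of the second sum indexed by $i<j$, the variables of $\partial\gamma$ are $\lambda_i+\lambda_j$ together with $\{\lambda_k:k\neq i,j\}$, whose total is again $\sum_{k=1}^{n+1}\lambda_k$, so the same global factor factors out. Collecting all terms gives
\begin{eqnarray*}
(\mathbf{d}(\partial\gamma))_{\lambda_1,\dots,\lambda_{n+1}}(a_1,\dots,a_{n+1})
=\Big(\partial_M+\sum_{k=1}^{n+1}\lambda_k\Big)(\mathbf{d}\gamma)_{\lambda_1,\dots,\lambda_{n+1}}(a_1,\dots,a_{n+1})
=(\partial(\mathbf{d}\gamma))_{\lambda_1,\dots,\lambda_{n+1}}(a_1,\dots,a_{n+1}),
\end{eqnarray*}
the last equality holding because $\mathbf{d}\gamma$ is an $(n+1)$-cochain. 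The degenerate case $n=0$ is checked directly: for $\gamma\in M$ one has $(\mathbf{d}(\partial_M\gamma))_\lambda a=a_\lambda(\partial_M\gamma)=(\partial_M+\lambda)(a_\lambda\gamma)=(\partial(\mathbf{d}\gamma))_\lambda a$.

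Finally, the subcomplex assertion follows formally: if $\eta\in\partial\widetilde{C}^{n}_{\alpha}$, say $\eta=\partial\gamma$, then $\mathbf{d}\eta=\mathbf{d}\partial\gamma=\partial\mathbf{d}\gamma\in\partial\widetilde{C}^{n+1}_{\alpha}$, so $\mathbf{d}$ preserves $\partial\widetilde{C}^{\bullet}_{\alpha}$, and $\mathbf{d}^2=0$ there by restriction; hence $\partial\widetilde{C}^{\bullet}_{\alpha}$ is a subcomplex of $\widetilde{C}^{\bullet}_{\alpha}$. The only genuinely delicate point is the bookkeeping in the first sum: one must use \eqref{mo2} in the precise form $a_\lambda(\partial v)=(\partial+\lambda)a_\lambda v$ to see that commuting the $\lambda_i$-action past $\partial_M$ supplies exactly the one missing summand $\lambda_i$, uniformly in $i$, so that the partial sum $\sum_{k\neq i}\lambda_k$ attached to $\partial\gamma$ is promoted to the full $\sum_k\lambda_k$; everything else reduces to symmetry and the trivial identity $(\lambda_i+\lambda_j)+\sum_{k\neq i,j}\lambda_k=\sum_k\lambda_k$.
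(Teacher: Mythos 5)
Your argument is correct and follows essentially the same route as the paper: expand $\mathbf{d}(\partial\gamma)$, use the module axiom \eqref{mo2} to commute the $\lambda_i$-action past $\partial_M$ in the first sum (which supplies the missing $\lambda_i$), observe that the second sum already carries the full factor $\partial_M+\sum_k\lambda_k$ because $(\lambda_i+\lambda_j)+\sum_{k\neq i,j}\lambda_k=\sum_k\lambda_k$, and factor it out. Your extra checks that $\partial\gamma$ is again a cochain and that the $n=0$ case works are welcome but do not change the method.
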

\begin{proof}
For any $\gamma\in\widetilde{C}^{n-1}_{\alpha}(\R,M)$, we have
\begin{eqnarray*}
&&{{\rm{\bf d}}}(\partial\gamma)_{\lambda_1,\cdots,\lambda_n}(a_1,\cdots,a_n)\\
&=&\sum_{i=1}^{n}(-1)^{i+1}\alpha^{n-1}(a_i)_{\lambda_i}(\partial\gamma)_{\lambda_{1},\cdots,\hat{\lambda_i},
\cdots,\lambda_{n}}(a_1,\cdots,\hat{a_i},\cdots,a_{n})\\
&&+\sum_{1\leq i<j}^{n}(-1)^{i+j}(\partial\gamma)_{\lambda_i+\lambda_j,\lambda_1,\cdots,\hat{\lambda_i},\cdots,\hat{\lambda}_j,\cdots,\lambda_{n}}
([{a_i}_{\lambda_i}a_j],\alpha(a_1),\cdots,\hat{a}_i,\cdots,\hat{a}_j,\cdots,\alpha(a_{n}))\\
&=&\sum_{i=1}^{n}(-1)^{i+1}\alpha^{n-1}(a_i)_{\lambda_i}(\partial_M+\sum_{j=1,j\neq i}^{n}\lambda_j)\gamma_{\lambda_{1},\cdots,\hat{\lambda_i},\cdots,\lambda_{n}}(a_1,\cdots,\hat{a_i},\cdots,a_{n})\\
&&+\sum_{1\leq i<j}^{n}(-1)^{i+j}(\partial_M+\sum_{k=1,k\neq i,j}^{n}\lambda_k)\\
&&\gamma_{\lambda_i+\lambda_j,\lambda_1,\cdots,\hat{\lambda_i},\cdots,\hat{\lambda}_j,\cdots,\lambda_{n}}
([{a_i}_{\lambda_i}a_j],\alpha(a_1),\cdots,\hat{a}_i,\cdots,\hat{a}_j,\cdots,\alpha(a_{n}))\\
&=&\sum_{i=1}^{n}(-1)^{i+1}(\partial_M+\sum_{j=1}^{n}\lambda_j)\alpha^{n-1}(a_i)_{\lambda_i}
\gamma_{\lambda_{1},\cdots,\hat{\lambda_i},\cdots,\lambda_{n}}(a_1,\cdots,\hat{a_i},\cdots,a_{n})\\
&&+(\partial_M+\sum_{k=1}^{n}\lambda_k)\sum_{1\leq i<j}^{n}(-1)^{i+j}\\
&&\gamma_{\lambda_i+\lambda_j,\lambda_1,\cdots,\hat{\lambda_i},\cdots,\hat{\lambda}_j,\cdots,\lambda_{n}}
([{a_i}_{\lambda_i}a_j],\alpha(a_1),\cdots,\hat{a}_i,\cdots,\hat{a}_j,\cdots,\alpha(a_{n}))\\
&=&\partial({{\rm{\bf d}}}\gamma)_{\lambda_1,\cdots,\lambda_n}(a_1,\cdots,a_n).
\end{eqnarray*}
So ${{\rm{\bf d}}}\partial=\partial {{\rm{\bf d}}}$ and $\partial\widetilde{C}^{\bullet}_{\alpha}\subset\widetilde{C}^{\bullet}_{\alpha}$ forms a subcomplex.
\end{proof}

Define the quotient complex $$C^{\bullet}_{\alpha}(\R,M)=\widetilde{C}^{\bullet}_{\alpha}(\R,M)/\partial\widetilde{C}^{\bullet}_{\alpha}(\R,M)
=\bigoplus_{n\in\mathbb{Z}_+}C^{n}_{\alpha}(\R,M),$$
called the \emph{reduced complex}.
\bdefn\rm
The \emph{basic cohomology} $\widetilde{\rm H}^{\bullet}_{\alpha}(\R,M)$ of a multiplicative Hom-Lie conformal algebra $\R$ with coefficients
in a module $(M,\beta)$ is the cohomology of the  basic complex $\widetilde{C}^{\bullet}_{\alpha}$. The \emph{reduced cohomology} ${\rm H}^{\bullet}_{\alpha}(\R,M)$ is the cohomology of the reduced complex $C^{\bullet}_{\alpha}$.
\edefn
\begin{re}\rm
The basic cohomology $\widetilde{\rm H}^{\bullet}_{\alpha}(\R,M)$ is naturally a $\mathbb{C}[\partial]$-module, whereas the reduced cohomology ${\rm H}^{\bullet}_{\alpha}(\R,M)$ is a complex vector space.
\end{re}
\begin{re}\rm
The exact sequence $0\rightarrow\partial\widetilde{C}^{\bullet}_{\alpha}\rightarrow\widetilde{C}^{\bullet}_{\alpha}\rightarrow C^{\bullet}_{\alpha}\rightarrow0$ gives the long exact sequence of cohomology:
\begin{eqnarray*}
&&0\rightarrow {\rm H}^{0}_{\alpha}(\partial\widetilde{C}^{\bullet})\rightarrow
\widetilde{\rm H}^{0}_{\alpha}(\R,M)\rightarrow{\rm H}^{0}_{\alpha}(\R,M)\rightarrow\\
&&\ \ \,\rightarrow {\rm H}^{1}_{\alpha}(\partial\widetilde{C}^{\bullet})\rightarrow
\widetilde{\rm H}^{1}_{\alpha}(\R,M)\rightarrow{\rm H}^{1}_{\alpha}(\R,M)\rightarrow\\
&&\ \ \,\rightarrow {\rm H}^{2}_{\alpha}(\partial\widetilde{C}^{\bullet})\rightarrow
\widetilde{\rm H}^{2}_{\alpha}(\R,M)\rightarrow{\rm H}^{2}_{\alpha}(\R,M)\rightarrow\cdots.\\
\end{eqnarray*}
\end{re}
\section{Deformations of Hom-Lie conformal algebras}

Let $(\R,\alpha)$ be a regular Hom-Lie conformal algebra. For any fixed integer $s$, define
\begin{eqnarray}\label{la2}
a_\lambda b=[\alpha^{s}(a)_\lambda b],\ \ \forall \ a,b\in \R.
\end{eqnarray}
\begin{prop} $(\R,\alpha)$ is an $\R$-module with the $\lambda$-action given in \eqref{la2}.
\end{prop}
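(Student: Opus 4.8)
The plan is to verify directly that the pair $(M,\beta)=(\R,\alpha)$, together with the $\lambda$-action \eqref{la2}, satisfies the axioms \eqref{mo1}--\eqref{mo3} of Definition \ref{module}. Since $(\R,\alpha)$ is regular, $\alpha$ is invertible, so $\alpha^s$ makes sense for every $s\in\mathbb{Z}$ and is again an algebra automorphism of $\R$; in particular $\alpha^s([a_\lambda b])=[\alpha^s(a)_\lambda\alpha^s(b)]$ and $\alpha\circ\alpha^s=\alpha^s\circ\alpha$, facts I will use freely. This is the only point where regularity, rather than mere multiplicativity, is actually needed (so that negative powers of $\alpha$ are legitimate and still respect the $\lambda$-bracket).

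First I would dispatch the easy axioms. The conformal antilinearity relations \eqref{mo2} follow from $\alpha\partial=\partial\alpha$ and the conformal sesquilinearity \eqref{LCF1} of $\R$: for instance $(\partial a)_\lambda b=[\alpha^s(\partial a)_\lambda b]=[\partial\alpha^s(a)_\lambda b]=-\lambda[\alpha^s(a)_\lambda b]=-\lambda(a_\lambda b)$, and similarly $a_\lambda(\partial b)=[\alpha^s(a)_\lambda\partial b]=(\partial+\lambda)[\alpha^s(a)_\lambda b]=(\partial+\lambda)(a_\lambda b)$. For \eqref{mo3} with $\beta=\alpha$: the relation $\alpha\circ\partial=\partial\circ\alpha$ is built into the definition of a Hom-Lie conformal algebra, while $\alpha(a_\lambda b)=\alpha([\alpha^s(a)_\lambda b])=[\alpha^{s}(\alpha(a))_\lambda\alpha(b)]=\alpha(a)_\lambda\alpha(b)$ by multiplicativity.

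The main step is \eqref{mo1}, i.e. $\alpha(a)_\lambda(b_\mu c)-\alpha(b)_\mu(a_\lambda c)=[a_\lambda b]_{\lambda+\mu}\alpha(c)$. Unwinding \eqref{la2}, $\alpha(a)_\lambda(b_\mu c)=[\alpha^{s+1}(a)_\lambda[\alpha^s(b)_\mu c]]$. I would apply the Hom Jacobi identity \eqref{LCF3} after replacing $a,b,c$ there by $\alpha^s(a),\alpha^s(b),c$, so that the outer argument becomes $\alpha(\alpha^s(a))=\alpha^{s+1}(a)$; this yields
$$[\alpha^{s+1}(a)_\lambda[\alpha^s(b)_\mu c]]=[[\alpha^s(a)_\lambda\alpha^s(b)]_{\lambda+\mu}\alpha(c)]+[\alpha^{s+1}(b)_\mu[\alpha^s(a)_\lambda c]].$$
Using that $\alpha^s$ is an algebra automorphism, $[\alpha^s(a)_\lambda\alpha^s(b)]=\alpha^s([a_\lambda b])$, so the first term on the right equals $[\alpha^s([a_\lambda b])_{\lambda+\mu}\alpha(c)]=[a_\lambda b]_{\lambda+\mu}\alpha(c)$ by \eqref{la2}, and the second term equals $[\alpha^s(\alpha(b))_\mu(a_\lambda c)]=\alpha(b)_\mu(a_\lambda c)$. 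Transposing the last summand gives precisely \eqref{mo1}.

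I do not expect a genuine obstacle here: everything reduces to careful bookkeeping with the exponents of $\alpha$, the decisive ingredient being the Hom Jacobi identity \eqref{LCF3} combined with the fact that every integer power of $\alpha$ is an algebra automorphism. Once \eqref{mo1} is in place, assembling the three verifications completes the proof.
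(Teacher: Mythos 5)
Your verification is correct and is precisely the direct axiom-check that the paper's one-line proof alludes to but does not write out; the key points (using the Hom Jacobi identity with arguments $\alpha^s(a),\alpha^s(b),c$, and the fact that regularity makes $\alpha^s$ a bracket-preserving automorphism for every integer $s$, including negative ones) are exactly what is needed. No gaps.
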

\begin{proof} It only consists of checking the axioms from Definition \ref{module}.
\end{proof}

\begin{re}\rm In the case of $s=0$, $(\R,\alpha)$ as an $\R$-module is just the usual adjoint module. Otherwise, we denote the module $(\R,\alpha)$ by $\R_s$ to emphasize the dependence of $\R$ on $s$, and call $\R_s$ the $\alpha^s$-adjoint module over $\R$.
\end{re}

Let $\gamma\in\widetilde{C}^{n}_{\alpha}(\R,\R_s)$. Define an operator ${{\rm{\bf d}}}_s:\widetilde{C}^{n}_{\alpha}(\R,\R_s)\rightarrow \widetilde{C}^{n+1}_{\alpha}(\R,\R_s)$ by
\begin{eqnarray*}
&&({{\rm{\bf d}}}_{s}\gamma)_{\lambda_{1},\cdots,\lambda_{n+1}}(a_1,\cdots,a_{n+1})\\
&=&\sum_{i=1}^{n+1}(-1)^{i+1}[\alpha^{n+s}(a_i)_{\lambda_i}\gamma_{\lambda_{1},\cdots,\hat{\lambda_i},
\cdots,\lambda_{n+1}}(a_1,\cdots,\hat{a_i},\cdots,a_{n+1})]\\
&&+\sum_{1\leq i<j}^{n+1}(-1)^{i+j}\gamma_{\lambda_i+\lambda_j,\lambda_1,\cdots,\hat{\lambda_i},\cdots,\hat{\lambda}_j,\cdots,\lambda_{n+1}}
([{a_i}_{\lambda_i}a_j],\alpha(a_1),\cdots,\hat{a}_i,\cdots,\hat{a}_j,\cdots,\alpha(a_{n+1})).
\end{eqnarray*}
Obviously, the operator ${{\rm{\bf d}}}_s$ is induced from the differential ${{\rm{\bf d}}}$. Thus ${{\rm{\bf d}}}_s$ preserves the space of cochains and satisfies ${{\rm{\bf d}}}_s^2=0$. In the following the complex $\widetilde{C}^{\bullet}_{\alpha}(\R, \R_s)$ is assumed to be associated with the differential ${{\rm{\bf d}}}_s$.

Taking $s=-1$, for $\psi\in\widetilde{C}^{2}_{\alpha}(\R,\R_{-1})$, we consider a $t$-parameterized family of bilinear operations on $\R$
\begin{eqnarray}\label{1-3}
[a_\lambda b]_t=[a_\lambda b]+t\psi_{\lambda,-\partial-\lambda}(a,b), \ \forall \, a,b\in \R.
\end{eqnarray}
Since $\psi$ commutes with $\alpha$, $\alpha$ is an algebra homomorphism with respect to the bracket $[\cdot_{\lambda} \cdot]_{t}$ for each $t$. If $[\cdot_{\lambda}\cdot]_{t}$ endows $(\R, [\cdot_{\lambda}\cdot]_{t}, \alpha)$ with a regular Hom-Lie conformal algebra structure, we say that $\psi$ generates a deformation of the regular Hom-Lie conformal algebra $(\R,\alpha)$. It is easy to see that $[\cdot_{\lambda}\cdot]_{t}$ satisfies \eqref{LCF1} and \eqref{LCF2}. If it is true for \eqref{LCF3}, expanding the Hom Jacobi identity for $[\cdot_{\lambda}\cdot]_{t}$ gives
\begin{eqnarray*}
&&[\alpha(a)_\lambda[b_\mu c]]+t([\alpha(a)_\lambda\psi_{\mu,-\partial-\mu}(b,c)]
+\psi_{\lambda,-\partial-\lambda}(\alpha(a),[b_\mu c]))\\
&&+t^{2}\psi_{\lambda,-\partial-\lambda}(\alpha(a),\psi_{\mu,-\partial-\mu}(b,c))\\
&=&[\alpha(b)_\mu[a_\lambda c]]+t([\alpha(b)_\mu\psi_{\lambda,-\partial-\lambda}(a,c)]
+\psi_{\mu,-\partial-\mu}(\alpha(b),[a_\lambda c]))\\
&&+t^{2}\psi_{\mu,-\partial-\mu}(\alpha(b),\psi_{\lambda,-\partial-\lambda}(a,c))\\
&&+[[a_\lambda b]_{\lambda+\mu}\alpha(c)]+t([\psi_{\lambda,-\partial-\lambda}(a,b)_{\lambda+\mu}\alpha(c)]+\psi_{\lambda+\mu,-\partial-\lambda-\mu}([a_\lambda b],\alpha(c)))\\
&&+t^{2}\psi_{\lambda+\mu,-\partial-\lambda-\mu}(\psi_{\lambda,-\partial-\lambda}(a,b),\alpha(c)).
\end{eqnarray*}
This is equivalent to the following relations
\begin{eqnarray}
&&[\alpha(a)_\lambda\psi_{\mu,-\partial-\mu}(b,c)]
+\psi_{\lambda,-\partial-\lambda}(\alpha(a),[b_\mu c])-\psi_{\lambda+\mu,-\partial-\lambda-\mu}([a_\lambda b],\alpha(c))\notag\\
&&\ \ \ \, =[\alpha(b)_\mu\psi_{\lambda,-\partial-\lambda}(a,c)]
+\psi_{\mu,-\partial-\mu}(\alpha(b),[a_\lambda c])
+[\psi_{\lambda,-\partial-\lambda}(a,b)_{\lambda+\mu}\alpha(c)] \label{represen9}\\
&&\psi_{\lambda,-\partial-\lambda}(\alpha(a),\psi_{\mu,-\partial-\mu}(b,c))\notag\\
&&\ \ \ \, =\psi_{\mu,-\partial-\mu}(\alpha(b),\psi_{\lambda,-\partial-\lambda}(a,c))
+\psi_{\lambda+\mu,-\partial-\lambda-\mu}(\psi_{\lambda,-\partial-\lambda}(a,b),\alpha(c)).\label{represen10}
\end{eqnarray}
By skew-symmetry and conformal sesquilinearity of $\psi$, we have
\begin{align}[\psi_{\lambda,-\partial-\lambda}(a,b)_{\lambda+\mu}\alpha(c)]
=-[\alpha(c)_{-\partial-\lambda-\mu}\psi_{\lambda,-\partial-\lambda}(a,b)]
=-[\alpha(c)_{-\partial-\lambda-\mu}\psi_{\lambda,\mu}(a,b)].\label{represen11}
\end{align}

On the other hand, let $\psi$ be a cocycle, i.e., ${{\rm{\bf d}}}_{-1}\psi=0.$ Explicitly,
\begin{eqnarray}
0&=&({{\rm{\bf d}}}_{-1}\psi)_{\lambda,\mu,\gamma}(a,b,c)\notag\\
&=&[\alpha(a)_\lambda\psi_{\mu,\gamma}(b,c)]-[\alpha(b)_\mu\psi_{\lambda,\gamma}(a,c)]
+[\alpha(c)_{\gamma}\psi_{\lambda,\mu}(a,b)]\notag\\
&&-\psi_{\lambda+\mu,\gamma}([a_\lambda b],\alpha(c))+\psi_{\lambda+\gamma,\mu}([a_\lambda c],\alpha(b))
-\psi_{\mu+\gamma,\lambda}([b_\mu c],\alpha(a))\notag\\
&=&[\alpha(a)_\lambda\psi_{\mu,\gamma}(b,c)]-[\alpha(b)_\mu\psi_{\lambda,\gamma}(a,c)]
-[\psi_{\lambda,\mu}(a,b)_{-\partial-\gamma}\alpha(c)]\notag\\
&&+\psi_{\lambda,\mu+\gamma}(\alpha(a),[b_\mu c])-\psi_{\mu,\lambda+\gamma}(\alpha(b),[a_\lambda c])
-\psi_{\lambda+\mu,\gamma}([a_\lambda b],\alpha(c)). \label{represen12}
\end{eqnarray}
By \eqref{LCF1}, \eqref{represen11} and replacing $\gamma$ by $-\lambda-\mu-\partial$ in (\ref{represen12}), we obtain
\begin{eqnarray*}
0&=&[\alpha(a)_\lambda\psi_{\mu,-\partial-\mu}(b,c)]-[\alpha(b)_\mu\psi_{\lambda,-\partial-\lambda}(a,c)]
-[\psi_{\lambda,\mu}(a,b)_{\lambda+\mu}\alpha(c)]\\
&&+\psi_{\lambda,-\partial-\lambda}(\alpha(a),[b_\mu c])-\psi_{\mu,-\partial-\mu}(\alpha(b),[a_\lambda c])
-\psi_{\lambda+\mu,-\partial-\lambda-\mu}([a_\lambda b],\alpha(c)),
\end{eqnarray*}
which is exactly \eqref{represen9}. Thus, when $\psi$ is a 2-cocycle satisfying \eqref{represen10},
 $(\R, [\cdot_{\lambda}\cdot]_{t}, \alpha)$ forms a regular Hom-Lie conformal algebra. In this case,
$\psi$ generates a deformation of the regular Hom-Lie conformal algebra $(\R,\alpha)$.

A deformation is said to be {\it trivial} if there is a linear operator $f\in\widetilde{C}^{1}_{\alpha}(\R,\R_{-1})$ such that for
${T_t}_\lambda={\rm id}+tf_\lambda$, there holds
\begin{eqnarray}\label{1-5}
{T_t}_{-\partial}([a_\lambda b]_t)=[({{T_t}_\lambda(a)})_\lambda {T_t}_{-\partial}(b)], \ \forall \ a,b\in\R.
\end{eqnarray}
\begin{defn}\rm
A linear operator $f\in\widetilde{C}^{1}_{\alpha}(\R,\R_{-1})$ is called a Hom-Nijienhuis operator if
\begin{eqnarray}\label{1-4}
[({f_\lambda(a)})_\lambda (f_\mu(b))]=f_{\lambda+\mu}([a_\lambda b]_N), \ \forall \ a,b\in\R,
\end{eqnarray}
where the bracket $[\cdot_{\lambda}\cdot]_N$ is defined by
\begin{eqnarray}\label{N1}
[a_\lambda b]_N=[(f_\lambda(a))_\lambda b]+[a_\lambda(f_{-\partial}(b))]-f_{-\partial}([a_\lambda b]), \ \forall \ a,b\in\R.\end{eqnarray}
\end{defn}
\begin{re}\rm
In particular, by $(\rm C1)_\lambda$ and setting $\mu=-\partial-\lambda$ in Eq.\eqref{1-4}, we obtain
\begin{eqnarray}\label{4-10}
[{(f_\lambda(a))}_\lambda f_{-\partial}(b)]=f_{-\partial}([a_\lambda b]_N), \ \forall \ a,b\in\R.
\end{eqnarray}
\end{re}
\begin{thm} Let $(\R,\alpha)$ be a regular Hom-Lie conformal algebra, and $f\in \tilde{C}_{\alpha}^{1}(\R,\R_{-1})$ a Hom-Nijienhuis operator. Then a deformation of $(\R,\alpha)$ can be obtained by putting
\begin{eqnarray}\label{N2}
\psi_{\lambda,-\partial-\lambda}(a, b):=({{\rm{\bf d}}}_{-1}f)_{\lambda,-\partial-\lambda}(a, b):=[a_\lambda b]_{N},  \ \forall \ a,b\in\R.
\end{eqnarray}
Furthermore,  this deformation is trivial.
\end{thm}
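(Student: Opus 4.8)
The plan is to verify the two requirements separately: first that $\psi := {{\rm{\bf d}}}_{-1}f$, restricted as in \eqref{N2}, genuinely generates a deformation, and second that this deformation is trivial with trivializing map $f$ itself. For the first part, observe that $\psi = {{\rm{\bf d}}}_{-1}f$ is automatically a $2$-cocycle since ${{\rm{\bf d}}}_{-1}^2 = 0$, so by the discussion preceding the theorem it suffices to check the quadratic obstruction \eqref{represen10}. I would substitute the explicit formula \eqref{N1} for $[a_\lambda b]_N$ into both sides of \eqref{represen10} and expand. The Hom-Nijienhuis condition \eqref{1-4}, together with its consequence \eqref{4-10}, is exactly what is needed to collapse the resulting mass of terms: wherever a nested expression $[(f_\lambda(a))_\lambda(f_\mu(b))]$ or $[(f_\lambda(a))_\lambda f_{-\partial}(b)]$ appears, rewrite it via \eqref{1-4} or \eqref{4-10} as $f$ applied to an $N$-bracket, and then use the Hom Jacobi identity \eqref{LCF3} for the original bracket on $\R$ to match the three cyclic-type terms. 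This is the computational heart of the argument.

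For the triviality claim, I would take $f$ as the operator in the definition of trivial deformation and set ${T_t}_\lambda = {\rm id} + t f_\lambda$, so that I must verify \eqref{1-5}, namely ${T_t}_{-\partial}([a_\lambda b]_t) = [({T_t}_\lambda(a))_\lambda {T_t}_{-\partial}(b)]$ for all $a,b \in \R$. Expanding the left side using \eqref{1-3} and \eqref{N2} gives $[a_\lambda b] + t\bigl([a_\lambda b]_N + f_{-\partial}([a_\lambda b])\bigr) + t^2 f_{-\partial}([a_\lambda b]_N)$, and by the definition \eqref{N1} of $[\cdot_\lambda\cdot]_N$ the coefficient of $t^1$ simplifies to $[(f_\lambda(a))_\lambda b] + [a_\lambda f_{-\partial}(b)]$. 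Expanding the right side gives $[a_\lambda b] + t\bigl([(f_\lambda(a))_\lambda b] + [a_\lambda f_{-\partial}(b)]\bigr) + t^2 [(f_\lambda(a))_\lambda f_{-\partial}(b)]$. The degree $0$ and degree $1$ terms now agree termwise, and the degree $2$ terms agree precisely because \eqref{4-10} says $[(f_\lambda(a))_\lambda f_{-\partial}(b)] = f_{-\partial}([a_\lambda b]_N)$. Hence \eqref{1-5} holds and the deformation is trivial.

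I expect the main obstacle to be the bookkeeping in \eqref{represen10}: after substituting \eqref{N1}, each of the three terms $\psi_{\lambda,-\partial-\lambda}(\alpha(a),\psi_{\mu,-\partial-\mu}(b,c))$, etc., expands into nine terms, and one must track the $\lambda$-arguments carefully through the conformal sesquilinearity relations \eqref{LCF1} and the substitutions dictated by the $\alpha^{-1}$-adjoint module structure \eqref{la2} (with $s=-1$). One should be careful that $\psi$ lives in $\widetilde{C}^2_\alpha(\R,\R_{-1})$, so the relevant module action is $a_\lambda b = [\alpha^{-1}(a)_\lambda b]$; using regularity of $\alpha$ (invertibility) and multiplicativity to move $\alpha^{\pm 1}$ past brackets is what makes the terms line up. Once the nested $f$-brackets are all rewritten via \eqref{1-4} and \eqref{4-10}, the remaining identity is a cyclic combination that vanishes by \eqref{LCF3}; the claim that $\psi$ "commutes with $\alpha$" (noted before \eqref{1-3}) is needed to ensure $\psi \in \widetilde{C}^2_\alpha(\R,\R_{-1})$ in the first place, which I would confirm using that $f$ commutes with $\alpha$ as an element of $\widetilde{C}^1_\alpha$ together with ${{\rm{\bf d}}}$ commuting with $\alpha$.
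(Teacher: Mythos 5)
Your proposal follows essentially the same route as the paper: the cocycle condition is automatic from $\psi={{\rm{\bf d}}}_{-1}f$ and ${{\rm{\bf d}}}_{-1}^2=0$, the quadratic condition \eqref{represen10} is verified by expanding the nested $N$-brackets via \eqref{N1} and cancelling terms using \eqref{1-4}, \eqref{4-10}, \eqref{LCF1} and the Hom Jacobi identity, and triviality is checked by matching the $t^0$, $t^1$, $t^2$ coefficients exactly as you do, with \eqref{4-10} handling the $t^2$ term. The only difference is that you sketch the term-by-term cancellation in \eqref{represen10} rather than carrying it out, but the strategy you describe is precisely the paper's computation.
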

\begin{proof}
Since $\psi={{\rm{\bf d}}}_{-1}f$,  ${{\rm{\bf d}}}_{-1}\psi=0$ is valid. To see that $\psi$ generates a deformation of $(\R,\alpha)$,  we need to check \eqref{represen10} for $\psi$. By \eqref{N1} and \eqref{N2}, we get
\begin{eqnarray*}
\psi_{\lambda,-\partial-\lambda}(\alpha(a), \psi_{\mu,-\partial-\mu}(b,c))=[\alpha(a)_\lambda[b_\mu c]_N]_N,
\end{eqnarray*}
where the right hand side reads
\begin{eqnarray*}
&&\psi_{\lambda,-\partial-\lambda}(\alpha(a), \psi_{\mu,-\partial-\mu}(b,c))=[\alpha(a)_\lambda[b_\mu c]_N]_N\\
&=&[(f_\lambda\alpha(a))_\lambda([b_\mu c]_N)]+[\alpha(a)_\lambda(f_{-\partial}([b_\mu c]_N))]-f_{-\partial}([\alpha(a)_\lambda[b_\mu c]_N])\\
&=&[(f_\lambda\alpha(a))_\lambda[(f_\mu(b))_\mu c]]+[(f_\lambda\alpha(a))_\lambda[b_\mu(f_{-\partial}(c))]]
-[(f_\lambda\alpha(a))_\lambda(f_{-\partial}([b_\mu c]))]\\
&&+[\alpha(a)_\lambda(f_{-\partial}([b_\mu c]_N))]\\
&&-f_{-\partial}([\alpha(a)_\lambda[(f_\mu(b))_\mu c]])-f_{-\partial}([\alpha(a)_\lambda[b_\mu(f_{-\partial}(c))]])+f_{-\partial}([\alpha(a)_\lambda(f_{-\partial}([b_\mu c]))])\\
&=&\underbrace{[(f_\lambda\alpha(a))_\lambda[(f_\mu(b))_\mu c]]}_{(1)}+\underbrace{[(f_\lambda\alpha(a))_\lambda[b_\mu(f_{-\partial}(c))]]}_{(2)}
-[(f_\lambda\alpha(a))_\lambda(f_{-\partial}([b_\mu c]))]\\
&&+\underbrace{[\alpha(a)_\lambda[(f_\mu(b))_\mu(f_{-\partial}(c))]]}_{(3)}\\
&&\underbrace{-f_{-\partial}([\alpha(a)_\lambda[(f_\mu(b))_\mu c]])}_{(4)}\underbrace{-f_{-\partial}([\alpha(a)_\lambda[b_\mu (f_{-\partial}(c))]])}_{(5)}+f_{-\partial}([\alpha(a)_\lambda(f_{-\partial}([b_\mu c]))]).
\end{eqnarray*}
Therefore,
\begin{eqnarray*}
&&\psi_{\mu,-\partial-\mu}(\alpha(b), \psi_{\lambda,-\partial-\lambda}(a,c))\\
&=&\underbrace{[(f_\mu\alpha(b))_\mu[(f_\lambda(a))_\lambda c]]}_{(1)'}+\underbrace{[(f_\mu\alpha(b))_\mu[a_\lambda (f_{-\partial}(c))]]}_{(3)'}-[(f_\mu\alpha(b))_\mu(f_{-\partial}([a_\lambda c]))]\\
&&+\underbrace{[\alpha(b)_\mu[(f_\lambda(a))_\lambda(f_{-\partial}(c))]]}_{(2)'}\underbrace{-f_{-\partial}([\alpha(b)_\mu[(f_\lambda(a))_\lambda c]])}_{(6)'}\underbrace{-f_{-\partial}([\alpha(b)_\mu[a_\lambda (f_{-\partial}(c))]])}_{(5)'}\\
&&+f_{-\partial}([\alpha(b)_\mu(f_{-\partial}([a_\lambda c]))])
\end{eqnarray*}
and
\begin{eqnarray*}
&&\psi_{\lambda+\mu,-\partial-\lambda-\mu}(\psi_{\lambda,-\partial-\lambda}(a,b),\alpha(c))\\
&=&[(f_{\lambda+\mu}([a_\lambda b]_N))_{\lambda+\mu}\alpha(c)]+[{([a_\lambda b]_N)}_{\lambda+\mu}(f_{-\partial}\alpha(c))]-f_{-\partial}([{([a_\lambda b]_N)}_{\lambda+\mu)}\alpha(c)])\\
&=&\underbrace{[[(f_{\lambda}(a))_\lambda(f_{\mu}(b))]_{\lambda+\mu}\alpha(c)]}_{(1)''}\\
&&+\underbrace{[[(f_\lambda(a))_\lambda b]_{\lambda+\mu}(f_{-\partial}\alpha(c))]}_{(2)''}+\underbrace{[[a_\lambda (f_{-\partial}(b))]_{\lambda+\mu}(f_{-\partial}\alpha(c))]}_{(3)''}-[(f_{-\partial}([a_\lambda b]))_{\lambda+\mu}(f_{-\partial}\alpha(c))]\\
&&\underbrace{-f_{-\partial}([(f_\lambda(a))_\lambda b]_{\lambda+\mu}\alpha(c)])}_{(6)''}\underbrace{-f_{-\partial}([[a_\lambda (f_{-\partial}(b))]_{\lambda+\mu}}_{(4)''}\alpha(c)])+f_{-\partial}([(f_{-\partial}([a_\lambda b]))_{\lambda+\mu}\alpha(c)]).
\end{eqnarray*}
Since $f$ is a Hom-Nijienhuis operator, we get
\begin{eqnarray*}
&&\ \ \ \ \,-[(f_\lambda\alpha(a))_\lambda(f_{-\partial}([b_\mu c]))]+f_{-\partial}([\alpha(a)_\lambda(f_{-\partial}([b_\mu c]))])\\
&&\ \ \ \ \ \ \ \ \ \ \ \ \ \ \ \ \ \ \ \,=\underbrace{-f_{-\partial}([(f_\lambda\alpha(a))_\lambda[b_\mu c]])}_{(6)}+\underbrace{f_{-\partial}^2([\alpha(a)_\lambda [b_\mu c]])}_{(7)},\\
&&\ \ \ \ \,-[(f_\mu\alpha(b))_\mu(f_{-\partial}([a_\lambda c]))]+f_{-\partial}([\alpha(b)_\mu(f_{-\partial}([a_\lambda c]))])\\
&&\ \ \ \ \ \ \ \ \ \ \ \ \ \ \ \ \ \ \ \,=\underbrace{-f_{-\partial}([(f_\mu\alpha(b))_\mu [a_\lambda c]])}_{(4)^{'}}
+\underbrace{f_{-\partial}^2([\alpha(b)_\mu [a_\lambda c]])}_{(7)^{'}}.
\end{eqnarray*}
By \eqref{LCF1} and \eqref{4-10},
\begin{eqnarray*}
&&-[(f_{-\partial}([a_\lambda b]))_{\lambda+\mu}(f_{-\partial}\alpha(c))]+f_{-\partial}([(f_{-\partial}([a_\lambda b]))_{\lambda+\mu}\alpha(c)])\\
&=&-[(f_{\lambda+\mu}([a_\lambda b]))_{\lambda+\mu}(f_{-\partial}\alpha(c))]+f_{-\partial}([(f_{\lambda+\mu}([a_\lambda b]))_{\lambda+\mu}\alpha(c)])\\
&=&\underbrace{-f_{-\partial}([[a_\lambda b]_{\lambda+\mu}(f_{-\partial}\alpha(c))])}_{(5)^{''}}+\underbrace{f_{-\partial}^2([[a_\lambda b]_{\lambda+\mu}\alpha(c)])}_{(7)^{''}}.
\end{eqnarray*}
Note that according to the Hom Jacobi identity and \eqref{LCF1} for $a ,b , c\in\R$,
$$[\alpha(a)_\lambda[(f_\mu(b))_\mu(f_{-\partial}(c))]]=[[a_\lambda f_\mu(b)]_{\lambda+\mu}(f_{-\partial}\alpha(c))]+[(f_\mu\alpha(b))_\mu[a_\lambda (f_{-\partial}(c))]]$$
is equivalent to
\begin{eqnarray*}[\alpha(a)_\lambda[(f_\mu(b))_\mu(f_{-\partial}(c))]]=[[a_\lambda f_{-\partial}(b)]_{\lambda+\mu}(f_{-\partial}\alpha(c))]+[(f_\mu\alpha(b))_\mu[a_\lambda(f_{-\partial}(c))]].\end{eqnarray*}
Thus $(i)+(i)^{'}+(i)^{''}=0$, for $i=1,\cdots,7$. This proves that
$\psi$ generates a deformation of the regular Hom-Lie conformal algebra $(\R,\alpha)$.

Let ${T_{t}}_\lambda=\id+tf_\lambda$. By \eqref{1-3} and \eqref{N2},
\begin{eqnarray}\label{1-1}
{T_{t}}_{-\partial}([a_\lambda b]_{t})
&=&(\id+tf_{-\partial})([a_\lambda b]+t\psi_{\lambda,-\partial-\lambda}(a,b))\nonumber\\
&=&(\id+tf_{-\partial})([a_\lambda b]+t[a_\lambda b]_{N})\nonumber\\
&=&[a_\lambda b]+t([a_\lambda b]_{N}+f_{-\partial}([a_\lambda b]))+t^{2}f_{-\partial}([a_\lambda b]_{N}).
\end{eqnarray}
On the other hand,
\begin{eqnarray}\label{1-2}
[{T_{t}}_\lambda(a)_\lambda {T_{t}}_{-\partial}(b)]
&=&[(a+tf_\lambda(a))_\lambda(b+tf_{-\partial}(b))]\nonumber\\
&=&[a_\lambda b]+t([(f_\lambda(a))_\lambda b]+[a_\lambda (f_{-\partial}(b))])+t^{2}[(f_\lambda(a))_\lambda (f_{-\partial}(b))].
\end{eqnarray}
Combining \eqref{1-1} with \eqref{1-2} gives
$T_{t}([a_\lambda b]_{t})=[T_{t}(a)_\lambda T_{t}(b)].$
Therefore the deformation is trivial.
\end{proof}
\section{$\alpha^k$-Derivations of multiplicative Hom-Lie conformal algebras}

For convenience, we denote by $\A$ the ring $\mathbb{C}[\partial]$ of polynomials in the indeterminate $\partial$.
\begin{defn}\rm
A conformal linear map between $\A$-module $V$ and $W$ is a linear map $\phi:V\rightarrow \A[\lambda]\otimes_{\A}W $ such that
\begin{eqnarray}\label{clm}
\phi(\partial v)=(\partial+\lambda)(\phi v).
\end{eqnarray}
\end{defn}

We will often abuse the notation by writing $\phi:V\rightarrow W$ any time it is clear from the context that $\phi$ is a conformal linear map. We will also write $\phi_\lambda$ instead of $\phi$ to emphasize the dependence of $\phi$ on $\lambda$.

The set of all conformal linear maps from $V$ to $W$ is denoted by \Chom$(V,W)$ and is made into an $\A$-module via
\begin{eqnarray}\label{der1}
(\partial\phi)_\lambda v=-\lambda\phi_\lambda v.
\end{eqnarray}
We will write \Cend$(V)$ for \Chom$(V,V)$.
\begin{defn}\rm
Let $(\R,\alpha)$ be a multiplicative Hom-Lie conformal algebra. Then a conformal linear map $D_\lambda:\R\rightarrow \R[\lambda]$ is called an $\alpha^{k}$-derivation of $(\R,\alpha)$ if
\begin{eqnarray*} D_\lambda\circ\alpha=\alpha\circ{D_\lambda}, \end{eqnarray*}
and
\begin{eqnarray}D_\lambda([a_\mu b])=[D_\lambda(a)_{\lambda+\mu}\alpha^{k}(b)]+[\alpha^{k}(a)_\mu D_\lambda(b)].\end{eqnarray}
\end{defn}

Denote by $ \texttt{\Der}_{\alpha^s}(\R)$ the set of $\alpha^s$-derivations of the multiplicative  Hom-Lie conformal algebra $(\R,\alpha)$. For any $a\in \R$ satisfying $\alpha(a)=a$, define $D_{k}(a):\R \rightarrow \R$ by\\
$$D_{k}(a)_\lambda(b)=[a_\lambda\alpha^{k}(b)], \quad \forall \, b\in \R.$$
Then $D_{k}(a)$ is an $\alpha^{k+1}$-derivation, which is called an \textbf{inner} $\alpha^{k+1}$-derivation. In fact,
\begin{eqnarray*}
D_{k}(a)_\lambda(\partial b)&=&[a_\lambda\alpha^{k}(\partial b)]=[a_\lambda\partial(\alpha^{k}(b))]=(\partial+\lambda) D_{k}(a)_\lambda(b),\\
D_{k}(a)_\lambda(\alpha(b))&=&[a_\lambda\alpha^{k+1}(b)]=\alpha([a_\lambda\alpha^{k}(b)])=\alpha\circ D_{k}(a)_\lambda(b), \\
D_{k}(a)_\lambda([b_\mu c])
&=&[a_\lambda\alpha^{k}([b_\mu c])]
=[\alpha(a)_\lambda[\alpha^{k}(b)_\mu\alpha^{k}(c)]]\\
&=&[\alpha^{k+1}(b)_\mu[a_\lambda\alpha^{k}(c)]]+[[a_\lambda\alpha^{k}(b)]_{\lambda+\mu}\alpha^{k+1}(c)]\\
&=&[(D_{k}(a)_\lambda(b))_{\lambda+\mu}\alpha^{k+1}(c)]+[\alpha^{k+1}(b)_\mu(D_{k}(a)_\lambda(c))].
\end{eqnarray*}
Denote by $ \mathrm{Inn}_{\alpha^k}(\R)$ the set of inner  $\alpha^{k}$-derivations. 
For $D_\lambda \in \texttt{\Der}_{\alpha^{k}}(\R)$ and $D^{'}_{\mu-\lambda} \in \texttt{\Der}_{\alpha^{s}}(\R),$ define their commutator $[D_\lambda D^{'}]_\mu$ by
\begin{eqnarray}\label{anticom}
[D_\lambda D^{'}]_\mu(a)=D_\lambda(D^{'}_{\mu-\lambda}a)-D^{'}_{\mu-\lambda}(D_\lambda a), \ \forall\, a\in\R.
\end{eqnarray}
\begin{lem}For any $D_\lambda \in \texttt{\Der}_{\alpha^{k}}(\R)$ and $D^{'}_{\mu-\lambda} \in \texttt{\Der}_{\alpha^{s}}(\R)$, we have
$$[D_\lambda D^{'}]\in \texttt{\Der}_{\alpha^{k+s}}(\R)[\lambda].$$
\end{lem}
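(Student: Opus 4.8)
The plan is to verify directly that the commutator $[D_\lambda D']_\mu$, as defined in \eqref{anticom}, satisfies the three defining conditions of an $\alpha^{k+s}$-derivation: conformal linearity (i.e. the sesquilinear rule \eqref{clm}), commutativity with $\alpha$, and the twisted Leibniz rule with twist $\alpha^{k+s}$. First I would check conformal linearity: applying $[D_\lambda D']_\mu$ to $\partial a$ and using that $D$ and $D'$ each satisfy \eqref{clm} (with their respective spectral parameters $\lambda$ and $\mu-\lambda$) gives $(\partial+\mu)$ times $[D_\lambda D']_\mu(a)$, after noting that the two ``inner'' shifts $\lambda$ and $(\mu-\lambda)$ add up to $\mu$; this is the same bookkeeping as in the classical conformal-derivation setting. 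Commutativity with $\alpha$ is immediate: since $D_\lambda\circ\alpha=\alpha\circ D_\lambda$ and $D'_{\mu-\lambda}\circ\alpha=\alpha\circ D'_{\mu-\lambda}$, the two terms in \eqref{anticom} each commute with $\alpha$, hence so does their difference.

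The substantive step is the twisted Leibniz rule: I must show
\[
[D_\lambda D']_\mu([a_\nu b])=\big[([D_\lambda D']_\mu a)_{\mu+\nu}\alpha^{k+s}(b)\big]+\big[\alpha^{k+s}(a)_\nu ([D_\lambda D']_\mu b)\big].
\]
The approach is to expand the left-hand side using the definition of the commutator, then apply the $\alpha^k$-derivation property of $D$ and the $\alpha^s$-derivation property of $D'$ to each of the two resulting terms $D_\lambda(D'_{\mu-\lambda}([a_\nu b]))$ and $D'_{\mu-\lambda}(D_\lambda([a_\nu b]))$. Each expansion produces four bracket terms; among the eight terms total, the two ``mixed'' cross terms of the form $[\,(D_\lambda a)_{\ast}\,(D'_{\mu-\lambda}b)\,]$ and $[\,(D'_{\mu-\lambda}a)_{\ast}\,(D_\lambda b)\,]$ appear with opposite signs in the two halves and cancel. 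Here I would use multiplicativity of $\alpha$ (to move $\alpha^k$ past $\alpha^s$ and recover $\alpha^{k+s}$) and the commutativity relations $D\circ\alpha^s=\alpha^s\circ D$, $D'\circ\alpha^k=\alpha^k\circ D'$ to bring the surviving terms into the form $[\,(\,\cdots\,)_{\mu+\nu}\alpha^{k+s}(b)\,]$ and $[\,\alpha^{k+s}(a)_\nu(\,\cdots\,)\,]$, where the inner arguments reassemble into $[D_\lambda D']_\mu(a)$ and $[D_\lambda D']_\mu(b)$ respectively.

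I expect the main obstacle to be purely notational rather than conceptual: keeping the spectral parameters straight. One must be careful that when $D'$ acts first with parameter $\mu-\lambda$ and then $D_\lambda$ acts, the Leibniz rule for $D$ is applied to a bracket whose first argument already carries the shift $\mu-\lambda$, so the outer parameter becomes $\lambda+(\mu-\lambda)+\nu=\mu+\nu$; symmetrically in the other order. Verifying that all four ``surviving'' terms land on exactly the parameter $\mu+\nu$ (and not, say, $\lambda+\nu$ or $\mu-\lambda+\nu$) is the only place where an error could creep in, and it is exactly the point where the identity $\lambda+(\mu-\lambda)=\mu$ does the work. Once the cancellation of the mixed terms and the parameter match are confirmed, the twisted Leibniz identity follows, and hence $[D_\lambda D']\in\Der_{\alpha^{k+s}}(\R)[\lambda]$.
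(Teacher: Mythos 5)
Your proposal is correct and follows essentially the same route as the paper: verify conformal sesquilinearity by direct computation, then expand the commutator applied to $[a_\gamma b]$ via the two twisted Leibniz rules and cancel the four mixed cross terms in pairs using $D\circ\alpha^s=\alpha^s\circ D$ and $D'\circ\alpha^k=\alpha^k\circ D'$, with the spectral parameters recombining to $\mu+\gamma$ exactly as you describe. The only (harmless) difference is that you also explicitly check commutation with $\alpha$, a condition the paper's proof leaves tacit.
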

\begin{proof}
For any $a,b \in \R$, we have
\begin{eqnarray*}
[D_\lambda D^{'}]_\mu (\partial a)
&=&D_\lambda(D^{'}_{\mu-\lambda}\partial a)-D^{'}_{\mu-\lambda}(D_\lambda \partial a)\\
&=&D_\lambda((\partial+\mu-\lambda)D^{'}_{\mu-\lambda}a)+ D^{'}_{\mu-\lambda}((\partial+\lambda)D_\lambda a)\\
&=&(\partial+\mu)D_\lambda(D^{'}_{\mu-\lambda}a)-(\partial+\mu)D^{'}_{\mu-\lambda}(D_\lambda a)\\
&=&(\partial+\mu)[D_\lambda D^{'}]_\mu(a),
\end{eqnarray*}
and
\begin{eqnarray*}[D_\lambda D^{'}]_\mu([a_\gamma b])
&=&D_\lambda(D^{'}_{\mu-\lambda}[a_\gamma b])-D^{'}_{\mu-\lambda}(D_\lambda [a_\gamma b])\\
&=&D_\lambda({[D^{'}_{\mu-\lambda}(a)}_{\mu-\lambda+\gamma}\alpha^{s}(b)]+[\alpha^{s}(a)_\gamma D^{'}_{\mu-\lambda}(b)])\\
&&-D^{'}_{\mu-\lambda}([D_\lambda(a)_{\lambda+\gamma}\alpha^{k}(b)]+[\alpha^{k}(a)_\gamma D_\lambda(b)])\\
&=&[(D_\lambda( D^{'}_{\mu-\lambda}(a))_{\mu+\gamma}\alpha^{k+s}(b)]+[\alpha^{k}(D^{'}_{\mu-\lambda}(a))_{\mu-\lambda+\gamma} D_\lambda(\alpha^{s}(b))]\\
&&+[D_\lambda(\alpha^{s}(a))_{\lambda+\gamma}\alpha^{k}(D^{'}_{\mu-\lambda}(b))]+[\alpha^{k+s}(a)_\gamma (D_\lambda( D^{'}_{\mu-\lambda}(b)))]\\
&&-[(D^{'}_{\mu-\lambda}D_\lambda(a))_{\mu+\gamma}\alpha^{k+s}(b)]-[\alpha^{s}(D_\lambda(a))_{\lambda+\gamma}( D^{'}_{\mu-\lambda}(\alpha^{k}(b)))]\\
&&-[(D^{'}_{\mu-\lambda}(\alpha^{k}(a)))_{\mu-\lambda+\gamma}\alpha^{s}(D_\lambda(b))]-[\alpha^{k+s}(a)_\lambda(D ^{'}_{\mu-\lambda} (D_\lambda(b)))]\\
&=&[([D_\lambda D^{'}]_\mu a)_{\mu+\gamma}\alpha^{k+s}(b)]+[\alpha^{k+s}(a)_\gamma([D_\lambda D^{'}]_\mu b)].
\end{eqnarray*}
Therefore, $[D_\lambda D^{'}]\in \texttt{\Der}_{\alpha^{k+s}}(\R)[\lambda]$.
\end{proof}
Denote
\begin{eqnarray}
\texttt{\Der}(\R)=\bigoplus_{k\geq 0} \texttt{\Der}_{\alpha^{k}}(\R).\end{eqnarray}

\begin{prop} $(\texttt{\Der}(\R),\alpha^{'})$ is a Hom-Lie conformal algebra with respect to \eqref{anticom}, where
$\alpha^{'}(D)=D\circ\alpha$.
\end{prop}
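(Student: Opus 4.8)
The plan is to verify the three axioms (\ref{LCF1})--(\ref{LCF3}) of a Hom-Lie conformal algebra for the pair $(\texttt{\Der}(\R),\alpha')$, where the $\lambda$-bracket is the commutator \eqref{anticom} and the $\mathbb{C}[\partial]$-module structure on $\Chom(\R,\R)\supset\texttt{\Der}(\R)$ is the one from \eqref{der1}. The preceding Lemma already does the essential closure work: it shows $[D_\lambda D']\in\texttt{\Der}_{\alpha^{k+s}}(\R)[\lambda]$ whenever $D\in\texttt{\Der}_{\alpha^k}(\R)$ and $D'\in\texttt{\Der}_{\alpha^s}(\R)$, so the commutator respects the $\mathbb{Z}_{\ge0}$-grading and lands in $\texttt{\Der}(\R)$. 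What remains is to check that $\texttt{\Der}(\R)$ is a $\mathbb{C}[\partial]$-submodule of $\Cend(\R)$ (each $\texttt{\Der}_{\alpha^k}(\R)$ is closed under $D\mapsto\partial D$, which is immediate from $(\partial D)_\lambda=-\lambda D_\lambda$ together with conformal linearity), and that $\alpha'(D)=D\circ\alpha$ is a well-defined $\mathbb{C}$-linear map $\texttt{\Der}(\R)\to\texttt{\Der}(\R)$ commuting with $\partial$.

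First I would record the elementary facts: (i) $\alpha'$ preserves each $\texttt{\Der}_{\alpha^k}(\R)$, since if $D_\lambda$ commutes with $\alpha$ and satisfies the $\alpha^k$-Leibniz rule, then so does $D_\lambda\circ\alpha$ (here multiplicativity of $\alpha$, i.e. $\alpha([a_\mu b])=[\alpha(a)_\mu\alpha(b)]$, is used); (ii) $\alpha'\circ\partial=\partial\circ\alpha'$ as operators on $\Cend(\R)$, because $(\partial(D\circ\alpha))_\lambda=-\lambda D_\lambda\circ\alpha=((\partial D)\circ\alpha)_\lambda$. Next comes conformal sesquilinearity \eqref{LCF1}: $[(\partial D)_\lambda D']_\mu=-\lambda[D_\lambda D']_\mu$ and $[D_\lambda(\partial D')]_\mu=(\partial+\lambda)[D_\lambda D']_\mu$ should follow by unwinding \eqref{anticom} and \eqref{der1}, keeping careful track of the shift $\mu-\lambda$ in the slot of $D'$; the second identity is the one requiring attention because of how $\partial$ interacts with the polynomial coefficients. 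Skew-symmetry \eqref{LCF2}, $[D_\lambda D']=-[D'_{-\partial-\lambda}D]$, should be a direct consequence of the antisymmetry of the commutator bracket \eqref{anticom} once the correct evaluation of $\mu$ is substituted.

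The main obstacle is the Hom Jacobi identity \eqref{LCF3}: for $D\in\texttt{\Der}_{\alpha^k}(\R)$, $D'\in\texttt{\Der}_{\alpha^s}(\R)$, $D''\in\texttt{\Der}_{\alpha^t}(\R)$ one must show
\[
[\alpha'(D)_\lambda[D'_\mu D'']]=[[D_\lambda D']_{\lambda+\mu}\alpha'(D'')]+[\alpha'(D')_\mu[D_\lambda D'']].
\]
I would expand every bracket using \eqref{anticom}, so that each side becomes an alternating sum of fourfold composites of the operators $D,D',D''$ applied to $a\in\R$, with the outermost $\alpha$ coming from $\alpha'$. The key point is that $D_\lambda\circ\alpha=\alpha\circ D_\lambda$ for a derivation, so the $\alpha$'s can be commuted through freely and the whole computation reduces to the ordinary Jacobi-type identity for the associative composition of conformal linear operators — i.e. the same bookkeeping as showing $\Cend(\R)$ with its commutator is a Lie conformal algebra, which is exactly the content alluded to in \cite{FHS}. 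The terms should cancel in pairs after matching the $\lambda,\mu$-labels; the only subtlety is confirming that the argument shifts (the $\mu-\lambda$ in \eqref{anticom}, and the $\lambda+\mu$ appearing in the bracket of a cochain) are consistent on both sides, which is where I expect to spend the bulk of the verification. I would then conclude that all axioms hold, so $(\texttt{\Der}(\R),\alpha')$ is a Hom-Lie conformal algebra.
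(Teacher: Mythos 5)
Your proposal is correct and follows essentially the same route as the paper: the axioms \eqref{LCF1} and \eqref{LCF2} are read off from \eqref{clm}, \eqref{der1} and \eqref{anticom}, closure comes from the preceding lemma, and the Hom Jacobi identity is verified by expanding all three brackets into fourfold composites acting on $\alpha(a)$ (using $D_\lambda\circ\alpha=\alpha\circ D_\lambda$ to push the $\alpha$ innermost) and cancelling terms in pairs. The paper's proof is just a more terse version of exactly this computation.
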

\begin{proof} By \eqref{der1}, $\texttt{\Der}(\R)$ is a $\mathbb{C}[\partial]$-module.
By \eqref{clm}, \eqref{der1} and \eqref{anticom}, \eqref{LCF1} and \eqref{LCF2} are satisfied.
To check the Hom Jacobi identity, we compute separately
\begin{eqnarray*}
[\alpha^{'}(D)_\lambda[D^{'}_\mu D^{''}]]_\theta(a)
&=&(D\circ\alpha)_\lambda([D^{'}_\mu D^{''}]_{\theta-\lambda} a)-[D^{'}_\mu D^{''}]_{\theta-\lambda}((D\circ\alpha)_\lambda a)\\
&=&D_\lambda([D^{'}_\mu D^{''}]_{\theta-\lambda}\alpha(a))-[D^{'}_\mu D^{''}]_{\theta-\lambda}(D_\lambda\alpha(a))\\
&=&D_\lambda(D^{'}_\mu(D^{''}_{\theta-\lambda-\mu}\alpha(a)))-D_\lambda(D^{''}_{\theta-\lambda-\mu}(D^{'}_\mu \alpha(a)))\\
&&-D^{'}_\mu(D^{''}_{\theta-\lambda-\mu}(D_\lambda\alpha(a)))+D^{''}_{\theta-\lambda-\mu}(D^{'}_\mu(D_\lambda\alpha(a))),\\
{[\alpha^{'}(D^{'})_\mu[D_\lambda D^{''}]]_\theta(a)}
&=&D^{'}_\mu(D_\lambda(D^{''}_{\theta-\lambda-\mu}\alpha(a)))-D^{'}_\mu(D^{''}_{\theta-\lambda-\mu}(D_\lambda\alpha(a)))\\
&&-D_\lambda(D^{''}_{\theta-\lambda-\mu}(D^{'}_\mu\alpha(a)))+D^{''}_{\theta-\lambda-\mu}(D_\lambda(D^{'}_\mu\alpha(a))),\\
{[[D_\lambda D^{'}]_{\lambda+\mu}\alpha^{'}(D^{''})]_\theta(a)}
&=&[D_\lambda D^{'}]_{\lambda+\mu}(D^{''}_{\theta-\lambda-\mu}\alpha(a))-D^{''}_{\theta-\lambda-\mu}([D_\lambda D^{'}]_{\lambda+\mu}\alpha(a))\\
&=&D_\lambda(D^{'}_\mu(D^{''}_{\theta-\lambda-\mu}\alpha(a)))-D^{'}_\mu(D_\lambda(D^{''}_{\theta-\lambda-\mu}\alpha(a)))\\
&&-D^{''}_{\theta-\lambda-\mu}(D_\lambda(D^{'}_\mu\alpha(a)))+D^{''}_{\theta-\lambda-\mu}(D^{'}_\mu(D_\lambda\alpha(a))).
\end{eqnarray*}
 Thus $[\alpha^{'}(D)_\lambda[D^{'}_\mu D^{''}]]_\theta(a)=[\alpha^{'}(D^{'})_\mu[D_\lambda D^{''}]]_\theta(a)+[[D_\lambda D^{'}]_{\lambda+\mu}\alpha^{'}(D^{''})]_\theta(a).$ This proves that $(\texttt{\Der}(\R),\alpha^{'})$ is a Hom-Lie conformal algebra.
\end{proof}


At the end of this section,  we give an application of the $\alpha$-derivations of a multiplicative Hom-Lie conformal algebra
$(\R,\alpha)$. For any $D_\lambda\in \Cend(\R)$, define a bilinear operation $[\cdot_\lambda\cdot]_{D}$ on the vector space $\R\oplus \mathbb{R} D$ by
\begin{eqnarray}\label{4-1}
[(a+mD)_\lambda(b+nD)]_{D}=[a_\lambda b]+mD_\lambda(b)-nD_{-\lambda-\partial}(a), \ \forall \, a,b\in\R, \ m,n\in\mathbb{R},
\end{eqnarray}
and a linear map $\alpha^{'}:\R\oplus \mathbb{R} D \rightarrow \R\oplus \mathbb{R} D$ by $\alpha^{'}(a+D)=\alpha(a)+D$.
\begin{prop}
$(\R \oplus \mathbb{R} D,\alpha^{'})$ is a multiplicative Hom-Lie conformal algebra if and only if $D_\lambda$ is an $\alpha$-derivation of $(\R,\alpha)$.
\end{prop}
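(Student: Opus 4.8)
The plan is to verify the axioms \eqref{LCF1}--\eqref{LCF3} for the pair $(\R\oplus\mathbb{R}D,\alpha')$ with the bracket $[\cdot_\lambda\cdot]_D$ of \eqref{4-1}, treating the ``if'' and ``only if'' directions together by tracking exactly which identities the $\alpha$-derivation condition is responsible for. First I would note that $\R\oplus\mathbb{R}D$ carries the obvious $\mathbb{C}[\partial]$-module structure with $\partial D=0$, and that conformal sesquilinearity \eqref{LCF1} follows immediately from \eqref{4-1} together with \eqref{clm}: acting by $\partial$ on the first or second slot produces the factor $-\lambda$ or $(\partial+\lambda)$ on each of the three terms, using $D_\lambda(\partial a)=(\partial+\lambda)D_\lambda(a)$ and the defining relation \eqref{der1} for $D_{-\lambda-\partial}$. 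Skew-symmetry \eqref{LCF2} is a short direct check: expanding $[(b+nD)_{-\partial-\lambda}(a+mD)]_D$ and using skew-symmetry of $[\cdot_\lambda\cdot]$ on $\R$ reproduces $-[(a+mD)_\lambda(b+nD)]_D$, with the two ``derivation'' terms swapping roles and signs. None of this uses that $D$ is a derivation.

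Next I would turn to multiplicativity of $\alpha'$, i.e.\ $\alpha'([(a+mD)_\lambda(b+nD)]_D)=[(\alpha'(a+mD))_\lambda(\alpha'(b+nD))]_D$. Expanding both sides, the $\R$-component requires $\alpha([a_\lambda b])=[\alpha(a)_\lambda\alpha(b)]$ (true since $(\R,\alpha)$ is multiplicative) together with $\alpha(D_\lambda(b))=D_\lambda(\alpha(b))$ and $\alpha(D_{-\lambda-\partial}(a))=D_{-\lambda-\partial}(\alpha(a))$; these last two are precisely the commutation condition $D_\lambda\circ\alpha=\alpha\circ D_\lambda$ built into the definition of an $\alpha$-derivation, so this part of the argument is one of the places the hypothesis enters. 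The $\mathbb{R}D$-component is automatic since $\alpha'$ fixes $D$.

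The substantive step is the Hom Jacobi identity \eqref{LCF3}. I would plug in three general elements $a+mD$, $b+nD$, $c+pD$ and expand $[\alpha'(a+mD)_\lambda[(b+nD)_\mu(c+pD)]_D]_D$ and the two terms on the right using \eqref{4-1} twice each. The pure $\R$-contributions (the $m=n=p=0$ part) collapse to the Hom Jacobi identity of $(\R,\alpha)$. The terms linear in the coefficient attached to $D$ organize into three groups, according to whether the surviving $D$ came from the first, second, or third argument; by skew-symmetry and conformal sesquilinearity one can reduce, say, the group carrying the coefficient of the third argument to a single relation of the form $D_\lambda([b_\mu c])=[D_\lambda(b)_{\lambda+\mu}\alpha(c)]+[\alpha(b)_\mu D_\lambda(c)]$ (after suitable relabeling and use of \eqref{LCF1} to move $-\partial-\lambda$ substitutions around), which is exactly the $\alpha^1$-derivation rule; the other two groups reduce to the same rule by skew-symmetry. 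The terms quadratic in the $D$-coefficients vanish identically because $[D_\lambda D']_\mu$ lands in $\R$ with no $D$-component, i.e.\ the bracket of two ``$D$-directions'' is zero in $\R\oplus\mathbb{R}D$. Running these reductions in reverse gives the ``only if'' direction: if \eqref{LCF3} holds for all $a,b,c$ and the choice $m=n=0$, $p=1$, then the derivation identity for $D$ is forced.

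The main obstacle I anticipate is purely organizational rather than conceptual: keeping the bookkeeping of the many expanded terms straight, in particular making sure the arguments $-\partial-\lambda$, $-\partial-\mu$, $-\partial-\lambda-\mu$ produced by \eqref{4-1} are handled consistently (using \eqref{clm}, \eqref{der1}, and \eqref{LCF1}) so that the three ``linear in $D$'' groups genuinely coincide with a single instance of the $\alpha$-derivation identity after relabeling. I would structure the write-up so that the $\R$-part, the multiplicativity/commutation part, and the Hom Jacobi part are checked in that order, isolating the two explicit places where the $\alpha$-derivation hypothesis is used and noting that these same two places are what make the converse work.
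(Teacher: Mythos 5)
Your overall strategy is the same as the paper's: check \eqref{LCF1} and \eqref{LCF2} directly from \eqref{4-1}, extract the commutation $D_\lambda\circ\alpha=\alpha\circ D_\lambda$ from multiplicativity of $\alpha'$, and identify the Hom Jacobi identity on triples containing a single copy of $D$ with the $\alpha$-derivation rule, which yields both implications at once. Apart from bookkeeping choices (the paper runs the ``only if'' direction first and reads off the derivation identity from the triple $(D,a,b)$ with $D$ in the first slot, rather than your normalization $m=n=0$, $p=1$), this is the same proof.

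There is, however, one step whose justification is wrong: the claim that the terms quadratic in the $D$-coefficients ``vanish identically because the bracket of two $D$-directions is zero.'' The vanishing of $[D_\lambda D]_D$ kills only the middle term of the Hom Jacobi identity for the triple $(D,D,a)$; the two outer terms survive and read $D_\lambda(D_\mu(a))$ on the left and $D_\mu(D_\lambda(a))$ on the right. Hence the quadratic part of \eqref{LCF3} is equivalent to $D_\lambda(D_\mu(a))=D_\mu(D_\lambda(a))$ for all $a\in\R$, which is \emph{not} a consequence of $D_\lambda$ being an $\alpha$-derivation: for a conformal linear map the coefficients $D_{(n)}$ need not commute, and already for an inner derivation $D_\lambda=[a_\lambda\cdot]$ the Jacobi identity gives $D_\lambda D_\mu-D_\mu D_\lambda=[[a_\lambda a]_{\lambda+\mu}\cdot]$, which is nonzero whenever $[a_\lambda a]\neq0$ (e.g.\ the Virasoro-type bracket $[L_\lambda L]=(\partial+2\lambda)L$). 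To be fair, the paper's own proof does not confront this either --- it dismisses the Hom Jacobi identity in the ``if'' direction as ``easy to check'' --- but your write-up makes the faulty cancellation explicit, so you should either impose the self-commutation of $D_\lambda$ as an extra hypothesis, restrict the verification of \eqref{LCF3} to triples with at most one $D$-component, or give an actual argument for why the cross terms cancel; as written, that step fails.
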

\begin{proof}
Suppose that $(\R \oplus \mathbb{R} D,\alpha^{'})$ is a multiplicative Hom-Lie conformal algebra. First, expanding both sides of
$\alpha^{'}([(a+mD)_\lambda(b+nD)]_{D})=[\alpha^{'}(a+mD)_\lambda\alpha^{'}(b+nD)]_{D}$ gives
$\alpha([a_\lambda b])+m\alpha\circ D_\lambda(b)-n\alpha\circ D_{-\lambda-\partial}(a)
=[\alpha(a)_\lambda\alpha(b)]+m D_\lambda\alpha(b)-n D_{-\lambda-\partial}\alpha(a)$ and thus $\alpha\circ D_\lambda=D_\lambda\circ\alpha$. Second, the Hom Jacobi identity gives
$$[\alpha^{'}(D)_\mu[a_\lambda b]_D]_D=[{[D_\mu a]_D}_{\lambda+\mu}\alpha^{'}(b)]_D+[\alpha^{'}(a)_\lambda[D_\mu b]_D]_D,$$
which is exactly $D_\mu([a_\lambda b])=[(D_\mu a)_{\lambda+\mu}\alpha(b)]+[\alpha(a)_\lambda(D_\mu b)]$ by \eqref{4-1}. Therefore, $D_\lambda$ is an $\alpha$-derivation of $(\R,\alpha)$.

Conversely, let $D_\lambda$ be an $\alpha$-derivation of $(\R,\alpha)$. For any $a, b, c\in \R$, $m,n\in\mathbb{R}$,
\begin{eqnarray*}
[(b+nD)_{-\partial-\lambda}(a+mD)]_{D}
&=&[b_{-\partial-\lambda}a]+n D_{-\lambda-\partial}(a)- m D_\lambda(b)\\
&=&-([a_\lambda b]+m D_\lambda(b)-n D_{-\lambda-\partial}(a))\\
&=&-[(a+mD)_\lambda(b+nD)]_{D},
\end{eqnarray*}
which proves \eqref{LCF2}.
And it is obvious that
\begin{eqnarray*}
&&[\partial D_\lambda a]_D=-\lambda[D_\lambda a]_D,\\
&&[\partial a_\lambda D]_D=-D_{-\partial-\lambda}(\partial a)=-\lambda[a_\lambda D]_D,\\
&&[D_\lambda\partial a]_D=D_\lambda(\partial a)=(\partial+\lambda)D_\lambda(a)=(\partial+\lambda)[D_\lambda a]_D,\\
&&[a_\lambda \partial D]_D=-(\partial D)_{-\lambda-\partial}a=(\partial+\lambda)[a_\lambda D]_D,\\
&&\alpha^{'}\circ\partial=\partial\circ\alpha^{'}.
\end{eqnarray*}
Thus \eqref{LCF1} follows. The Hom Jacobi identity is easy to check.
\end{proof}

\section{Generalized $\alpha^k$-derivations of multiplicative Hom-Lie conformal algebras}

\par

Let $(\R,\alpha)$ be a multiplicative Hom-Lie conformal algebra. Define $$\Omega=\{D_\lambda\in \Cend(\R)|D_\lambda\circ\alpha=\alpha\circ D_\lambda\}.$$
Then $\Omega$ is a Hom-Lie conformal algebra with respect to \eqref{anticom}, and $\texttt{\Der}(\R)$ is a subalgebra of  $\Omega$.

\begin{defn}\rm An element $D_\mu$ in $\Omega$ is called
\begin{itemize}
\item a {\it generalized $\alpha^{k}$-derivation} of $\R$, if there exist $D^{'}_\mu,D^{''}_\mu\in\Omega$ such that
\begin{eqnarray}\label{5-1}
[(D_\mu(a))_{\lambda+\mu}\alpha^k(b)]+[\alpha^k(a)_\lambda(D^{'}_\mu(b))]=D^{''}_\mu([a_\lambda b]), \ \forall \ a,b\in\R.
\end{eqnarray}
\item an {\it $\alpha^{k}$-quasiderivation }of $\R$, if there is $D^{'}_\mu\in\Omega$ such that
\begin{eqnarray}\label{5-2}
[(D_\mu(a))_{\lambda+\mu}\alpha^k(b)]+[\alpha^k(a)_\lambda(D_\mu(b))]=D^{'}_\mu([a_\lambda b]), \ \forall \ a,b\in\R.
\end{eqnarray}
\item an {\it $\alpha^{k}$-centroid} of $\R$, if it satisfies
\begin{eqnarray}\label{5-3}
 [(D_\mu(a))_{\lambda+\mu}\alpha^k(b)]=[\alpha^k(a)_\lambda(D_\mu(b))]=D_\mu([a_\lambda b]), \ \forall \ a,b\in\R.
\end{eqnarray}
\item an {\it $\alpha^{k}$-quasicentroid} of $\R$, if it satisfies
\begin{eqnarray}\label{5-4}
[(D_\mu(a))_{\lambda+\mu}\alpha^k(b)]=[\alpha^k(a)_\lambda(D_\mu(b))], \ \forall \ a,b\in\R.
\end{eqnarray}
\item an {\it $\alpha^{k}$-central derivation} of $\R$, if it satisfies
\begin{eqnarray}\label{5-5}
[(D_\mu(a))_{\lambda+\mu}\alpha^k(b)]=D_\mu([a_\lambda b])=0, \ \forall \ a,b\in\R.
\end{eqnarray}
\end{itemize}
\end{defn}

Denote by $\GDer_{\alpha^k}(\R)$, $\QDer_{\alpha^k}(\R)$, ${\rm C}_{\alpha^k}(\R)$, $\QC_{\alpha^k}(\R)$ and $\ZDer_{\alpha^k}(\R)$ the sets of all generalized $\alpha^{k}$-derivations,
 $\alpha^{k}$-quasiderivations, $\alpha^{k}$-centroids, $\alpha^{k}$-quasicentroids and $\alpha^{k}$-central derivations of $\R$, respectively. Set
 $$\GDer(\R):=\bigoplus_{k\geq0}\GDer_{\alpha^k}(\R),\ \, \QDer(\R):=\bigoplus_{k\geq0}\QDer_{\alpha^k}(\R).$$
$${\rm C}(\R):=\bigoplus_{k\geq0}{\rm C}_{\alpha^k}(\R), \ \ \QC(\R):=\bigoplus_{k\geq0}\QC_{\alpha^k}(\R),\ \ \ZDer(\R):=\bigoplus_{k\geq0}\ZDer_{\alpha^k}(\R).$$
It is easy to see that
\begin{eqnarray}
\ZDer(\R)\subseteq \Der(\R)\subseteq \QDer(\R)\subseteq \GDer(\R)\subseteq \Cend(\R),\,
{\rm C}(\R)\subseteq \QC(\R)\subseteq \GDer(\R).\nonumber\\ \label{tower}
\end{eqnarray}

\begin{prop}Let $(\R,\alpha)$ be a multiplicative Hom-Lie conformal algebra. Then
\begin{enumerate}
\item[$(1)$] $\GDer(\R)$, $\QDer(\R)$ and ${\rm C}(\R)$ are subalgebras of $\Omega$.
\item[$(2)$] $\ZDer(\R)$ is an ideal of $\Der(\R)$.
\end{enumerate}
\end{prop}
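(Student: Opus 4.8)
The plan is to verify the two assertions by direct computation on the defining identities \eqref{5-1}--\eqref{5-5}, exploiting the fact (already established) that $\Omega$ is a Hom-Lie conformal algebra under the commutator \eqref{anticom} with twist $\alpha'(D)=D\circ\alpha$, so that closure of a subset $S\subseteq\Omega$ under $[\cdot_\lambda\cdot]$ together with $\alpha'$-invariance is exactly what "subalgebra" requires. For part $(1)$ I would treat $\GDer$ first, since $\QDer$ and ${\rm C}$ are handled by the same bookkeeping with fewer auxiliary maps. Given $D_\mu\in\GDer_{\alpha^k}(\R)$ with witnesses $D'_\mu,D''_\mu$ and $E_\nu\in\GDer_{\alpha^s}(\R)$ with witnesses $E'_\nu,E''_\nu$, I would show $[D_\mu E]_\theta\in\GDer_{\alpha^{k+s}}(\R)$ with witnesses $[D'_\mu E']_\theta$ and $[D''_\mu E'']_\theta$: substitute \eqref{anticom} into the left-hand side of \eqref{5-1} for $[D_\mu E]$, expand each term $D_\mu(E_{\theta-\mu}(a))$ and $E_{\theta-\mu}(D_\mu(a))$ using the $\GDer$ relations for $D$ and $E$ separately (each application introduces a factor of $\alpha^k$ or $\alpha^s$ on the passive slot, and $\alpha$-invariance $D\circ\alpha=\alpha\circ D$ lets these compose to $\alpha^{k+s}$), and collect. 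The four "mixed" terms — those where one of $D,E$ acts and the other sits composed with $\alpha$ — cancel in pairs exactly as in the proof that $(\texttt{\Der}(\R),\alpha')$ is a Hom-Lie conformal algebra; what survives assembles into $[D''_\mu E'']_\theta([a_\lambda b])$. One also checks $\alpha'$-invariance: $\alpha'([D_\mu E])=[D_\mu E]\circ\alpha$ again lies in the relevant $\GDer$ space because $\alpha$ commutes with everything in sight. For ${\rm C}(\R)$ the computation is shorter: if $D_\mu,E_\nu$ satisfy \eqref{5-3} then $[(D_\mu E_{\theta-\mu}(a))_{\ast}\alpha^{k+s}(b)]$ unwinds, via two applications of the centroid property pushing $D$ and $E$ across the bracket, to the same thing as $[\alpha^{k+s}(a)_\lambda (D_\mu E_{\theta-\mu}(b))]$ and to $[D_\mu E]_\theta([a_\lambda b])$, because centroid elements act as "conformal scalars" on products; the two orders of pushing agree since $\R$-elements on opposite slots are independent.

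For part $(2)$, to see $\ZDer(\R)$ is an ideal of $\Der(\R)$ I would first note $\ZDer(\R)\subseteq\Der(\R)$ from the tower \eqref{tower}, and that it is a $\mathbb{C}[\partial]$-submodule and $\alpha'$-stable by inspection of \eqref{5-5}. The substantive step is: for $D_\mu\in\ZDer_{\alpha^k}(\R)$ and $\delta_\nu\in\Der_{\alpha^s}(\R)$, show $[\delta_\nu D]_\theta\in\ZDer_{\alpha^{s+k}}(\R)$, i.e. that it kills products and that $[([\delta_\nu D]_\theta a)_{\ast}\alpha^{s+k}(b)]=0$. From \eqref{5-5}, $D_\mu([a_\lambda b])=0$ and $[(D_\mu(a))_{\lambda+\mu}\alpha^k(b)]=0$ for all $a,b$; the first of these, combined with skew-symmetry, also gives $[\alpha^k(a)_\lambda(D_\mu(b))]=0$, so $D_\mu$ maps everything into the "conformal center" in both variables. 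Now expand $[\delta_\nu D]_\theta([a_\lambda b]) = \delta_\nu(D_{\theta-\nu}[a_\lambda b]) - D_{\theta-\nu}(\delta_\nu[a_\lambda b])$: the first term is $\delta_\nu(0)=0$, and the second is $D_{\theta-\nu}\big([(\delta_\nu a)_{\ast}\alpha^s(b)]+[\alpha^s(a)_\nu(\delta_\nu b)]\big)$, which vanishes because $D$ annihilates every bracket. For the other half, $[([\delta_\nu D]_\theta a)_{\ast}\alpha^{s+k}(b)]$ splits as $[(\delta_\nu D_{\theta-\nu}a)_{\ast}\alpha^{s+k}(b)] - [(D_{\theta-\nu}\delta_\nu a)_{\ast}\alpha^{s+k}(b)]$; the second term is $0$ since $D_{\theta-\nu}$ of anything lands in the center and $\alpha^{s+k}(b)=\alpha^k(\alpha^s(b))$; the first term, using that $\delta_\nu$ is an $\alpha^s$-derivation, becomes $\delta_\nu\big([(D_{\theta-\nu}a)_{\ast}\alpha^k(b)]\big) - [\alpha^s(D_{\theta-\nu}a)_{\ast}(\delta_\nu\alpha^k(b))]$ after moving $\delta_\nu$ outside, and both pieces vanish — the first because the inner bracket is $0$ by \eqref{5-5}, the second because $\alpha^s(D_{\theta-\nu}a)=D_{\theta-\nu}(\alpha^s a)$ still lies in the center and pairs to zero. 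Hence $[\delta_\nu D]\in\ZDer(\R)[\nu]$, giving the ideal property.

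The main obstacle is purely organizational rather than conceptual: keeping the $\lambda$-indices straight through the commutator substitutions and making sure that each application of a derivation-type identity puts the correct power of $\alpha$ on the spectator argument, so that the bookkeeping "$k+s$" on the twisted powers comes out consistently and the mixed terms genuinely cancel. I would set up once, in a preliminary line, the shorthand that $D\circ\alpha=\alpha\circ D$ for all $D\in\Omega$ and that therefore $\alpha^k$ commutes past every map we write, then run the three cases of $(1)$ and the one case of $(2)$ in parallel, referring back to the already-proved Hom Jacobi cancellation pattern from Proposition on $\texttt{\Der}(\R)$ to avoid re-deriving it.
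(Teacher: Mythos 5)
Your proposal is correct and follows essentially the same route as the paper: for part $(1)$ it checks $\alpha'$-stability and closure of $\GDer(\R)$ under the commutator with witnesses $[D'_\mu E']$ and $[D''_\mu E'']$ (the paper likewise treats only $\GDer$ in detail and declares $\QDer$ and ${\rm C}$ analogous), and for part $(2)$ it performs the same two vanishing computations, merely with the order of the two arguments in the commutator reversed, which is immaterial by skew-symmetry. One cosmetic slip: the identity $[\alpha^k(a)_\lambda(D_\mu(b))]=0$ for $D_\mu\in\ZDer_{\alpha^k}(\R)$ follows by skew-symmetry from $[(D_\mu(a))_{\lambda+\mu}\alpha^k(b)]=0$, not from $D_\mu([a_\lambda b])=0$ as you state, but this does not affect the argument.
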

\begin{proof} $(1)$  We only prove that $\GDer(\R)$ is a subalgebra of $\Omega$. The proof for the other two cases is exactly analogous.

For $D_\mu\in\GDer_{\alpha^k}(\R), {H}_{\mu}\in\GDer_{\alpha^s}(\R)$, $a,b\in \R$, there exist $D^{'}_\mu,D^{''}_\mu\in\Omega$  (resp. $H^{'}_{\mu},H^{''}_{\mu}\in\Omega$ ) such that \eqref{5-1} holds for $D_\mu$ (resp. $H_{\mu}$). Recall that $\alpha'(D_\mu)=D_\mu\circ \alpha$.
\begin{eqnarray*}
[(\alpha^{'}(D_\mu)(a))_{\lambda+\mu}\alpha^{k+1}(b)]
&=&[(D_\mu(\alpha(a)))_{\lambda+\mu}\alpha^{k+1}(b)]
=\alpha([(D_\mu(a))_{\lambda+\mu}\alpha^{k}(b)])\\
&=&\alpha(D^{''}_\mu([a_\lambda b])-[\alpha^{k}(a)_\lambda D^{'}_\mu(b)])\\
&=&\alpha^{'}(D^{''}_\mu)([a_\lambda b])-[\alpha^{k+1}(a)_\lambda(\alpha^{'}(D^{'}_\mu)(b))].
\end{eqnarray*}
This gives $\alpha^{'}(D_\mu)\in\GDer_{\alpha^{k+1}}(\R)$. Furthermore, we need to show
\begin{eqnarray}\label{5-6}
[D^{''}_\mu H^{''}]_\theta([a_\lambda b])=[([D_\mu H]_\theta(a))_{\lambda+\theta}\alpha^{k+s}(b)]+[\alpha^{k+s}(a)_\lambda([D^{'}_\mu H^{'}]_\theta(b))].
\end{eqnarray}
By \eqref{anticom}, we have
\begin{eqnarray}
[([D_\mu H]_\theta(a))_{\lambda+\theta}\alpha^{k+s}(b)]
=[(D_\mu(H_{\theta-\mu}(a)))_{\lambda+\theta}\alpha^{k+s}(b)]-[(H_{\theta-\mu}(D_\mu(a)))_{\lambda+\theta}\alpha^{k+s}(b)]. \label{5-7}
\end{eqnarray}
By \eqref{5-1}, we obtain
\begin{eqnarray}
&&[(D_\mu(H_{\theta-\mu}(a)))_{\lambda+\theta}\alpha^{k+s}(b)]\nonumber\\&=&D^{''}_\mu([(H_{\theta-\mu}(a))_{\lambda+\theta-\mu}\alpha^s(b)])-[\alpha^k(H_{\theta-\mu}(a))_{\lambda+\theta-\mu}(D^{'}_\mu(\alpha^s(b)))]\nonumber\\
&=&D^{''}_\mu(H^{''}_{\theta-\mu}([a_\lambda b]))-D^{''}_\mu([\alpha^s(a)_\lambda (H^{'}_{\theta-\mu}(b)))]
\nonumber\\&&-H^{''}_{\theta-\mu}([\alpha^k(a)_\lambda(D^{'}_\mu(b))])+[\alpha^{k+s}(a)_\lambda(H^{'}_{\theta-\mu}(D^{'}_\mu(b)))],\label{5-8}\\
&&[(H_{\theta-\mu}(D_\mu(a)))_{\lambda+\theta}\alpha^{k+s}(b)]\nonumber\\&=&
H^{''}_{\theta-\mu}([(D_\mu(a))_{\lambda+\mu}\alpha^k(b)])-[\alpha^s(D_\mu(a))_{\lambda+\mu}(H^{'}_{\theta-\mu}(\alpha^k(b)))]\nonumber\\
&=&H^{''}_{\theta-\mu}(D^{''}_\mu([a_\lambda b]))-H^{''}_{\theta-\mu}([\alpha^k(a)_\lambda(D^{'}_\mu(b)])\nonumber\\
&&-D^{''}_\mu([\alpha^s(a)_\lambda(H^{'}_{\theta-\mu}(b))])+[\alpha^{k+s}(a)_\lambda(D^{'}_\mu(H^{'}_{\theta-\mu}(b))).\label{5-9}
\end{eqnarray}
Substituting \eqref{5-8} and \eqref{5-9} into \eqref{5-7} gives \eqref{5-6}. Hence $[D_\mu H]\in\GDer_{\alpha^{k+s}}(\R)[\mu]$,
and $\GDer(\R)$ is a Hom sub-algebra of $\Omega$.

(2) For ${D_1}_\mu\in\ZDer_{\alpha^k}(\R), {D_2}_{\mu}\in\Der_{\alpha^s}(\R)$, and $a, b \in \R$, we have
$$[(\alpha^{'}(D_1)_\mu(a))_{\lambda+\mu}\alpha^{k+1}(b)]=\alpha([({D_1}_\mu(a))_{\lambda+\mu}\alpha^{k}(b)])=\alpha^{'}(D_1)_\mu([a_\lambda b])=0,$$
which proves $\alpha^{'}(D_1)\in\ZDer_{\alpha^{k+1}}(\R).$ By \eqref{5-5},
\begin{eqnarray*}
[{D_{1}}_\mu D_2]_\theta([a_\lambda b])
&=&{D_1}_\mu({D_2}_{\theta-\mu}([a_\lambda b]))-{D_{2}}_{\theta-\mu}({D_{1}}_\mu([a_\lambda b]))={D_1}_\mu({D_2}_{\theta-\mu}([a_\lambda b]))\\
&=&{D_1}_\mu([({D_{2}}_{\theta-\mu}(a))_{\lambda+\theta-\mu}\alpha^{s}(b)]+[\alpha^{s}(a)_\lambda{D_{2}}_{\theta-\mu}(b)])=0,\\
{[[{D_{1}}_\mu D_2]_\theta(a)_{\lambda+\theta}\alpha^{k+s}(b)]}
&=&[({D_1}_\mu({D_2}_{\theta-\mu}(a))-{D_{2}}_{\theta-\mu}({D_{1}}_\mu(a)))_{\lambda+\theta}\alpha^{k+s}(b)]\\
&=&[-({D_{2}}_{\theta-\mu}({D_{1}}_\mu(a)))_{\lambda+\theta}\alpha^{k+s}(b)]\\
&=&-{D_2}_{\theta-\mu}([{D_1}_\mu(a)_{\lambda+\mu}\alpha^k(b)])+[\alpha^s({D_1}_\mu(a))_{\lambda+\mu}{D_{2}}_{\theta-\mu}(\alpha^k(b))]\\
&=&0.
\end{eqnarray*}
This shows that
$[{D_{1}}_\mu D_{2}] \in\ZDer_{\alpha^{k+s}}(\R)[\mu]$. Thus $\ZDer(\R)$ is an ideal of $\Der(\R)$.
\end{proof}

\begin{lem} \label{lemm5-1}
Let $(\R,\alpha)$ be a multiplicative Hom-Lie conformal algebra. Then
\begin{enumerate}
\item[$(1)$] $[\Der(\R)_\lambda{\rm C}(\R)]\subseteq{\rm C}(\R)[\lambda]$,
\item[$(2)$] $[\QDer(\R)_\lambda\QC(\R)]\subseteq\QC(\R)[\lambda]$,
\item[$(3)$] $[\QC(\R)_\lambda\QC(\R)]\subseteq\QDer(\R)[\lambda]$.
\end{enumerate}
\end{lem}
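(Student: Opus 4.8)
The plan is to verify each of the three inclusions by a direct computation with the defining identities \eqref{5-1}--\eqref{5-4}, in each case producing the element of $\Omega$ required by the relevant definition and checking it does the job. Since all three statements are of the same flavour, I would present the argument for part $(3)$ in full and only indicate the (simpler) modifications for $(1)$ and $(2)$, or vice versa; the cleanest order is probably $(1)$, then $(2)$, then $(3)$, since each builds on the bookkeeping of the previous one.

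For $(1)$: take $D_\lambda\in\Der_{\alpha^k}(\R)$ and $C_\mu\in{\rm C}_{\alpha^s}(\R)$. I would compute $[D_\lambda C]_\theta([a_\mu b])$ using \eqref{anticom}, then expand $D_\lambda$ on a bracket via the $\alpha^k$-derivation rule and $C_\mu$ on a bracket via the centroid rule \eqref{5-3}, and collect terms. The commutation $D_\lambda\circ\alpha=\alpha\circ D_\lambda$, $C_\mu\circ\alpha=\alpha\circ C_\mu$ (both hold since $D,C\in\Omega$) together with multiplicativity of $\alpha$ lets me slide the $\alpha$-powers around so that the cross terms cancel, leaving $[D_\lambda C]_\theta([a_\mu b]) = [([D_\lambda C]_\theta(a))_{\theta+\mu}\alpha^{k+s}(b)]$, and symmetrically the other equality in \eqref{5-3}; this shows $[D_\lambda C]\in{\rm C}_{\alpha^{k+s}}(\R)[\lambda]$. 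Part $(2)$ is the same computation but with \eqref{5-2} in place of the derivation rule for $D$ and \eqref{5-4} for the quasicentroid; now the ``correcting'' map is $[D^{(1)}_\lambda D^{(2)}]$ where $D^{(i)}$ are the partners supplied by \eqref{5-2} for the two quasiderivations involved — so I must also use that $[\QDer(\R)_\lambda\QDer(\R)]\subseteq\QDer(\R)[\lambda]$ implicitly, i.e. reuse the bookkeeping from Proposition~5.9(1).

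For $(3)$: let $C_\lambda, C'_\mu\in\QC(\R)$, say of degrees $k$ and $s$. I want to show $[C_\lambda C']_\theta$ is an $\alpha^{k+s}$-quasiderivation, i.e. exhibit $D'_\theta\in\Omega$ with $[(([C_\lambda C']_\theta)(a))_{\nu+\theta}\alpha^{k+s}(b)] + [\alpha^{k+s}(a)_\nu(([C_\lambda C']_\theta)(b))] = D'_\theta([a_\nu b])$. The natural guess is $D'_\theta = 0$: I expect that for quasicentroids the bracket $[C_\lambda C']_\theta$ actually annihilates all brackets, so that $[C_\lambda C']\in\ZDer$-like behaviour on the left side collapses. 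Concretely, starting from $[(C_\lambda(a))_{\nu+\lambda}\alpha^k(b)]=[\alpha^k(a)_\nu(C_\lambda(b))]$ and the analogous identity for $C'$, I would apply $C'$ (resp. $C$) to suitable brackets, use \eqref{5-4} twice to move the operators across the $\lambda$-bracket, and subtract; the Hom Jacobi identity is not even needed here, only \eqref{5-4}, sesquilinearity, and skew-symmetry. The outcome should be that $[(([C_\lambda C']_\theta)(a))_{\nu+\theta}\alpha^{k+s}(b)] = -[\alpha^{k+s}(a)_\nu(([C_\lambda C']_\theta)(b))]$, which is exactly \eqref{5-2} with $D'=0$, hence $[C_\lambda C']\in\QDer_{\alpha^{k+s}}(\R)[\lambda]$.

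The main obstacle is purely organizational: keeping the three $\lambda$-parameters ($\lambda,\mu$ from the commutators and $\nu$ — or $\theta$ — from the outer bracket) straight while repeatedly applying sesquilinearity and skew-symmetry to realign arguments, and making sure the shifted spectral parameters (things like $\theta-\mu$, $\nu+\theta$, $-\partial-\lambda$) match up so the cancellations are exact rather than merely ``morally'' true. A secondary point to be careful about is that each displayed operator identity must be checked to land in $\Omega$ (i.e. commutes with $\alpha$ and is conformal linear) before it can serve as the witness $D'_\mu$ or $D''_\mu$; but this is immediate from Lemma~4.7 / the fact that $\Omega$ is closed under \eqref{anticom}, so I would simply cite that. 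No step requires a genuinely new idea beyond the identities already in the excerpt.
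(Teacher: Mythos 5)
Your proposal is correct and is exactly the direct verification the paper has in mind (the paper's own proof of this lemma is literally the words ``It is straightforward''); in particular your computation for part $(3)$, showing that the bracket of two quasicentroids satisfies \eqref{5-2} with vanishing partner $D'=0$, is precisely the identity the paper later writes out as \eqref{5-16}. One small slip in your sketch of part $(2)$: the target $\QC_{\alpha^{k+s}}(\R)$ is defined by \eqref{5-4} alone, which involves no auxiliary operator, so no ``correcting map'' $[D^{(1)}_\lambda D^{(2)}]$ is needed and no appeal to closure of $\QDer(\R)$ under the bracket is required --- when the computation is carried out, the two occurrences of the partner $D'_\lambda$ of the quasiderivation cancel against each other and one is left directly with $[([D_\lambda Q]_\theta(a))_{\mu+\theta}\alpha^{k+s}(b)]=[\alpha^{k+s}(a)_\mu([D_\lambda Q]_\theta(b))]$.
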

\begin{proof} It is straightforward. \end{proof}

\begin{thm}
Let $(\R,\alpha)$ be a multiplicative Hom-Lie conformal algebra. Then $$\GDer(\R)=\QDer(\R)+\QC(\R).$$
\end{thm}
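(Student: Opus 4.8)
The plan is to establish the equality $\GDer(\R)=\QDer(\R)+\QC(\R)$ by proving the two inclusions separately; the inclusion $\QDer(\R)+\QC(\R)\subseteq\GDer(\R)$ is essentially immediate from the tower \eqref{tower}, since $\QDer(\R)\subseteq\GDer(\R)$ and $\QC(\R)\subseteq\GDer(\R)$, and $\GDer(\R)$ is closed under addition (it is a $\mathbb{C}[\partial]$-submodule of $\Cend(\R)$, in fact a subalgebra by the previous proposition). So the real content is the inclusion $\GDer(\R)\subseteq\QDer(\R)+\QC(\R)$, and by the grading it suffices to treat a homogeneous element $D_\mu\in\GDer_{\alpha^k}(\R)$.

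First I would fix $D_\mu\in\GDer_{\alpha^k}(\R)$ and choose $D'_\mu,D''_\mu\in\Omega$ witnessing \eqref{5-1}, i.e.
$$[(D_\mu(a))_{\lambda+\mu}\alpha^k(b)]+[\alpha^k(a)_\lambda(D'_\mu(b))]=D''_\mu([a_\lambda b]).$$
The natural guess, mirroring the classical Lie-algebra argument in \cite{L} and its Lie-conformal analogue in \cite{FHS}, is to write $D_\mu=\tfrac12(D_\mu+D'_\mu)+\tfrac12(D_\mu-D'_\mu)$ and show that the first summand lies in $\QDer_{\alpha^k}(\R)$ and the second in $\QC_{\alpha^k}(\R)$. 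Both summands lie in $\Omega$ since $\Omega$ is a $\mathbb{C}$-vector space and $D_\mu,D'_\mu\in\Omega$. To verify $\tfrac12(D_\mu+D'_\mu)\in\QDer_{\alpha^k}(\R)$, I would apply skew-symmetry \eqref{LCF2} together with conformal sesquilinearity to \eqref{5-1}: swapping the roles of $a$ and $b$ in \eqref{5-1} and using $[a_\lambda b]=-[b_{-\partial-\lambda}a]$ produces a companion identity, and adding it to \eqref{5-1} should yield
$$[((D_\mu+D'_\mu)(a))_{\lambda+\mu}\alpha^k(b)]+[\alpha^k(a)_\lambda((D_\mu+D'_\mu)(b))]=D''_\mu([a_\lambda b]),$$
up to the bookkeeping needed to identify the two evaluated brackets (this is where the Remark after the differential ${{\rm{\bf d}}}$, about replacing $\lambda$ by $-\partial-\mu$ in arguments of a cochain, and the manipulations already used in \eqref{represen11}--\eqref{represen12}, come in). That exhibits $D''_\mu$ as the required witness, so $\tfrac12(D_\mu+D'_\mu)\in\QDer_{\alpha^k}(\R)$. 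Subtracting instead of adding gives
$$[((D_\mu-D'_\mu)(a))_{\lambda+\mu}\alpha^k(b)]=[\alpha^k(a)_\lambda((D_\mu-D'_\mu)(b))],$$
i.e. $\tfrac12(D_\mu-D'_\mu)\in\QC_{\alpha^k}(\R)$, and summing the two pieces recovers $D_\mu$.

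The step I expect to be the main obstacle is the careful handling of the variable substitutions in the $\lambda$-brackets when applying skew-symmetry: in \eqref{5-1} the term $[\alpha^k(a)_\lambda D'_\mu(b)]$ carries the spectral parameter $\lambda$ on the second slot, whereas after the swap one gets a bracket of the form $[(D'_\mu(a))_{?}\alpha^k(b)]$ whose parameter must be reconciled with $\lambda+\mu$ via \eqref{LCF1} and the conformal sesquilinearity of $D'_\mu$ (namely $D'_\mu(\partial b)=(\partial+\mu)D'_\mu(b)$). One must also check that $D''_\mu$ genuinely serves as the quasiderivation witness for the symmetric part without any extra correction term — this is the crux, and it should follow because the $D''_\mu([a_\lambda b])$ term is itself skew in the sense that $D''_\mu([a_\lambda b])=-D''_\mu([b_{-\partial-\lambda}a])$, so it matches up on both sides of the swapped identity. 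Once these substitutions are pinned down the rest is routine, and the reverse inclusion together with the decomposition completes the proof.
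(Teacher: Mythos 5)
Your proposal is correct and follows essentially the same route as the paper: the identical decomposition $D_\mu=\tfrac12(D_\mu+D'_\mu)+\tfrac12(D_\mu-D'_\mu)$, with the companion identity obtained exactly as you describe by applying skew-symmetry to \eqref{5-1}, substituting $\lambda'=-\partial-\lambda-\mu$, and then swapping $a$ and $b$; the reverse inclusion is likewise read off from \eqref{tower}. The substitution bookkeeping you flag as the main obstacle is handled in the paper precisely in the way you anticipate, with $D''_\mu$ serving as the quasiderivation witness with no correction term.
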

\begin{proof} For $D_\mu\in\GDer_{\alpha^k}(\R)$, there exist $D^{'}_\mu, D^{''}_\mu\in\Omega$ such that
\begin{eqnarray}\label{5-10}
[D_\mu(a)_{\lambda+\mu}\alpha^k(b)]+[\alpha^k(a)_\lambda D^{'}_\mu(b)]=D^{''}_\mu([a_\lambda b]), \forall \, a,b\in \R.
\end{eqnarray}
By \eqref{LCF2} and (\ref{5-10}), we get
\begin{eqnarray}
[\alpha^k(b)_{-\partial-\lambda-\mu}D_\mu(a)]+[D^{'}_\mu(b)_{-\partial-\lambda}\alpha^k(a)]=D^{''}_\mu([b_{-\partial-\lambda}a]).\label{represen6}
\end{eqnarray}
By \eqref{LCF1} and setting $\lambda^{'}=-\partial-\lambda-\mu$ in (\ref{represen6}), we obtain
\begin{eqnarray}
[\alpha^k(b)_{\lambda^{'}}D_\mu(a)]+[D^{'}_\mu(b)_{\mu+\lambda^{'}}\alpha^k(a)]=D^{''}_\mu([b_{\lambda^{'}}a]).\label{5-11}
\end{eqnarray}
Then, changing the place of $a$ and $b$ and replacing $\lambda^{'}$ by $\lambda$ in (\ref{5-11}) give
\begin{eqnarray}
[\alpha^k(a)_\lambda D_\mu(b)]+[D^{'}_\mu(a)_{\lambda+\mu}\alpha^k(b)]=D^{''}_\mu([a_\lambda b]).\label{5-12}
\end{eqnarray}
Combining (\ref{5-10}) with (\ref{5-12}) gives
\begin{eqnarray*}
&&[\frac{D_\mu+D^{'}_\mu}{2}(a)_{\lambda+\mu}\alpha^k(b)]+[\alpha^k(a)_{\lambda}\frac{D_\mu+D^{'}_\mu}{2}(b)]=D^{''}_\mu([a_\lambda b]),\\
&&[\frac{D_\mu-D^{'}_\mu}{2}(a)_{\lambda+\mu}\alpha^k(b)]-[\alpha^k(a)_{\lambda}\frac{D_\mu-D^{'}_\mu}{2}(b)]=0.
\end{eqnarray*}
It follows that $\frac{D_\mu+D^{'}_\mu}{2}\in\QDer_{\alpha^k}(\R)$ and $\frac{D_\mu-D^{'}_\mu}{2}\in\QC_{\alpha^k}(\R)$. Hence
$$D_\mu=\frac{D_\mu+D^{'}_\mu}{2}+\frac{D_\mu-D^{'}_\mu}{2}\in\QDer(\R)+\QC(\R),$$
proving that $\GDer(\R)\subseteq\QDer(\R)+\QC(\R).$  The reverse inclusion relation follows from \eqref{tower} and Lemma \ref{lemm5-1}.
\end{proof}

\begin{thm}
Let $(\R,\alpha)$ be a multiplicative Hom-Lie conformal algebra, $\alpha$ a surjection and $\Z(\R)$ the center of $\R$. Then
$[{\rm C}(\R)_\lambda\QC(\R)]\subseteq\Chom(\R,\Z(\R))[\lambda]$. Moreover, if $\Z(\R)=0$, then $[{\rm C}(\R)_\lambda\QC(\R)]=0$.
\end{thm}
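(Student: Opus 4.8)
The plan is to take an arbitrary $D_\mu \in \mathrm{C}_{\alpha^k}(\R)$ and $D'_\nu \in \QC_{\alpha^s}(\R)$ and show that the bracket element $[D_\mu D']_\theta$, applied to any product $[a_\lambda b]$, lands in the center $\Z(\R)$; that is, $[([D_\mu D']_\theta(c))_{\,\mu'} \alpha^N(d)] = 0$ for all $c,d \in \R$ and suitable $N = k+s$. Since $\alpha$ is surjective, every element of $\R$ is of the form $\alpha^N(d)$, so it suffices to prove $[([D_\mu D']_\theta(c))_{\mu'} \alpha^{k+s}(d)] = 0$, and by skew-symmetry \eqref{LCF2} this is the natural quantity to attack. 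First I would expand $[D_\mu D']_\theta(c) = D_\mu(D'_{\theta-\mu}(c)) - D'_{\theta-\mu}(D_\mu(c))$ via \eqref{anticom} and treat the two resulting terms separately inside the $\lambda$-bracket against $\alpha^{k+s}(d)$.

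For the first term, $[(D_\mu(D'_{\theta-\mu}(c)))_{\mu'} \alpha^{k+s}(d)]$: since $D_\mu$ is an $\alpha^k$-centroid, by \eqref{5-3} this equals $D_\mu([D'_{\theta-\mu}(c)\,_{\mu''} \alpha^{s}(d)])$ for the appropriate shift of spectral parameter (using that $\alpha$, hence $\alpha^{k+s} = \alpha^k\circ\alpha^s$, is compatible with the centroid condition). Then, because $D'_\nu$ is an $\alpha^s$-quasicentroid, \eqref{5-4} lets me rewrite $[D'_{\theta-\mu}(c)\,_{\mu''} \alpha^s(d)] = [\alpha^s(c)\,_{\mu'''} D'_{\theta-\mu}(d)]$. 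For the second term, $[(D'_{\theta-\mu}(D_\mu(c)))_{\mu'} \alpha^{k+s}(d)]$: apply the quasicentroid relation \eqref{5-4} for $D'$ first to move $D'$ onto the second slot, obtaining $[\alpha^s(D_\mu(c))_{\mu'} D'_{\theta-\mu}(\alpha^k(d))]$, then use $D_\mu \in \mathrm{C}_{\alpha^k}$ again (in the form $[\alpha^k(\cdot)_\lambda D_\mu(\cdot)] = D_\mu([\cdot_\lambda\cdot])$, which is the other half of \eqref{5-3}) together with commutativity $D'\circ\alpha = \alpha\circ D'$ to bring everything to the same shape $D_\mu([\alpha^s(c)\,_{\mu'''} D'_{\theta-\mu}(d)])$ — up to applying $D_\mu$ after the fact, which again uses \eqref{5-3}. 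The two terms should then cancel, giving $0$.

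The main obstacle I expect is bookkeeping of the spectral parameters: each application of \eqref{5-3} or \eqref{5-4} shifts the $\lambda$'s (the centroid $D_\mu$ contributes a $\mu$ to the spectral parameter of the first argument), and skew-symmetry \eqref{LCF2} replaces $\lambda$ by $-\partial-\lambda-(\text{shifts})$, so I must track these carefully to be sure the two terms genuinely match and cancel rather than merely resemble each other. A secondary subtlety is the surjectivity hypothesis: I use it precisely to write an arbitrary second argument as $\alpha^{k+s}(d)$ and thereby split $\alpha^{k+s}$ as $\alpha^k\circ\alpha^s$ so that each of $D_\mu$ (needing $\alpha^k$) and $D'$ (needing $\alpha^s$) sees the power of $\alpha$ it requires; without surjectivity one could only conclude on the image $\alpha^{k+s}(\R)$. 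Once $[([D_\mu D']_\theta(c))_{\mu'} \alpha^{k+s}(d)] = 0$ is established for all $c,d$, surjectivity gives $[([D_\mu D']_\theta(c))_{\mu'} e] = 0$ for all $e \in \R$, i.e. $[D_\mu D']_\theta(c) \in \Z(\R)$, which is exactly $[\mathrm{C}(\R)_\lambda \QC(\R)] \subseteq \Chom(\R,\Z(\R))[\lambda]$; the final clause is immediate, since $\Z(\R)=0$ forces the bracket itself to vanish.
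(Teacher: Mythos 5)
Your proposal is correct and follows essentially the same route as the paper: expand $[D_\mu D']_\theta$ via \eqref{anticom}, use the centroid identity \eqref{5-3} to pull $D_\mu$ outside each term (using $D'\circ\alpha=\alpha\circ D'$ in the second), and let the quasicentroid identity \eqref{5-4} for $D'$ make the two terms cancel, with surjectivity of $\alpha$ used exactly as you describe to realize an arbitrary second argument as $\alpha^{k+s}(d)$. The only (cosmetic) difference is that you apply \eqref{5-4} to the first term immediately so the two terms match literally, whereas the paper factors out $D_\mu$ first and applies \eqref{5-4} once at the end to the difference of the inner brackets.
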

\begin{proof}  Since $\alpha$ is surjective, for any $b^{'}\in \R,$ there exists $b \in \R$ such that $b^{'}=\alpha^{k+s}(b)$.
For ${D_1}_\mu\in{\rm C}_{\alpha^k}(\R),{D_2}_{\mu}\in\QC_{\alpha^s}(\R)$, and $a\in \R$,
by \eqref{5-3}and \eqref{5-4}, we have
\begin{eqnarray*}
[([{D_{1}}_\mu D_{2}]_\theta(a))_{\lambda+\theta}b^{'}]
&=&[([{D_{1}}_\mu D_{2}]_\theta(a))_{\lambda+\theta}\alpha^{k+s}(b)]\\
&=&[({D_1}_\mu({D_2}_{\theta-\mu}(a)))_{\lambda+\theta}\alpha^{k+s}(b)]
-[({D_{2}}_{\theta-\mu}({D_{1}}_\mu(a)))_{\lambda+\theta}\alpha^{k+s}(b)]\\
&=&{D_1}_\mu([{D_2}_{\theta-\mu}(a)_{\lambda+\theta-\mu}\alpha^{s}(b)])
-[\alpha^s({D_1}_\mu(a))_{\lambda+\mu}{D_2}_{\theta-\mu}(\alpha^k(b))]\\
&=&{D_1}_\mu([{D_2}_{\theta-\mu}(a)_{\lambda+\theta-\mu}\alpha^{s}(b)])
-{D_1}_\mu([\alpha^s(a)_{\lambda}{D_2}_{\theta-\mu}(b)])\\
&=&{D_1}_\mu([{D_2}_{\theta-\mu}(a)_{\lambda+\theta-\mu}\alpha^{s}(b)]
-[\alpha^s(a)_{\lambda}{D_2}_{\theta-\mu}(b)])\\
&=&0.
\end{eqnarray*}
Hence $[{D_{1}}_\mu D_{2}](a)\in \Z(\R)[\mu]$, and then $[{D_{1}}_\mu D_{2}]\in\Chom(\R,\Z(\R))[\mu]$. If $\Z(\R)=0$, then $[{D_{1}}_\mu D_{2}](a)=0$, $\forall$ $a\in\R.$ Thus $[{\rm C}(\R)_\lambda\QC(\R)]=0$.
\end{proof}

\begin{prop}
Let $(\R,\alpha)$ be a multiplicative Hom-Lie conformal algebra, and $\alpha$ a surjection. If $\Z(\R)=0$, then $\QC(\R)$ is a Hom-Lie conformal algebra if and  only if $[\QC(\R)_\lambda\QC(\R)]=0$.
\end{prop}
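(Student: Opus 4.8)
The plan is to prove both implications of the equivalence. One direction is trivial: if $[\QC(\R)_\lambda\QC(\R)]=0$, then $\QC(\R)$ is closed under the commutator $\lambda$-bracket inherited from $\Omega$, and since it is already a $\mathbb{C}[\partial]$-submodule of $\Omega$ stable under $\alpha'$ (because each defining identity \eqref{5-4} is preserved by $\alpha'$, as in the proof of Proposition on subalgebras), it is a sub-Hom-Lie conformal algebra of $\Omega$. So the content is in the forward direction: assuming $\QC(\R)$ is a Hom-Lie conformal algebra (in particular closed under \eqref{anticom}), deduce $[\QC(\R)_\lambda\QC(\R)]=0$ when $\Z(\R)=0$.

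For the forward direction I would argue as follows. Take $D_\mu\in\QC_{\alpha^k}(\R)$ and $H_\nu\in\QC_{\alpha^s}(\R)$; by hypothesis $[D_\mu H]\in\QC(\R)[\mu]$, so $[D_\mu H]_\theta$ satisfies the quasicentroid identity \eqref{5-4}. On the other hand, by Lemma \ref{lemm5-1}(3) we always have $[D_\mu H]\in\QDer_{\alpha^{k+s}}(\R)[\mu]$; explicitly there is $D'''$ with $[([D_\mu H]_\theta(a))_{\lambda+\theta}\alpha^{k+s}(b)]+[\alpha^{k+s}(a)_\lambda([D_\mu H]_\theta(b))]=D'''_\theta([a_\lambda b])$. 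Being simultaneously a quasicentroid and a quasiderivation forces, after combining the two relations, $2[([D_\mu H]_\theta(a))_{\lambda+\theta}\alpha^{k+s}(b)]=D'''_\theta([a_\lambda b])$, i.e. $[D_\mu H]_\theta$ is (up to the factor $2$) an $\alpha^{k+s}$-centroid. Thus $[D_\mu H]\in\C(\R)[\mu]\cap\QC(\R)[\mu]$. Now invoke the previous theorem, $[\C(\R)_\lambda\QC(\R)]\subseteq\Chom(\R,\Z(\R))[\lambda]$: applying it with the first argument the centroid element $[D_\mu H]_\theta$ — more precisely, re-running that theorem's computation with $D_1$ a centroid and $D_2$ an arbitrary quasicentroid — shows that bracketing $[D_\mu H]$ against any quasicentroid lands in $\Chom(\R,\Z(\R))=0$. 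Alternatively, and more directly, since $[D_\mu H]_\theta\in\C_{\alpha^{k+s}}(\R)$ and also lies in $\QC(\R)$, one computes for all $a,b$ that $[([D_\mu H]_\theta(a))_{\lambda+\theta}\alpha^{k+s}(b)]$ equals both $[D_\mu H]_\theta([a_\lambda b])$ and $[\alpha^{k+s}(a)_\lambda([D_\mu H]_\theta(b))]$, and the surjectivity of $\alpha$ together with skew-symmetry pushes $[D_\mu H]_\theta(a)$ into $\Z(\R)$, whence $[D_\mu H]_\theta(a)=0$ for all $a$.

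The key technical point to get right will be the bookkeeping that turns ``quasicentroid $+$ quasiderivation $\Rightarrow$ centroid'': one must verify that the $\alpha^{k+s}$-quasiderivation witness $D'''$ for $[D_\mu H]$ can be taken to be $[D_\mu H]$ itself scaled appropriately, using \eqref{5-2} with $D'=D$. Concretely, adding the quasicentroid identity $[([D_\mu H]_\theta(a))_{\lambda+\theta}\alpha^{k+s}(b)]=[\alpha^{k+s}(a)_\lambda([D_\mu H]_\theta(b))]$ to itself and comparing with the quasiderivation identity isolates $D'''_\theta([a_\lambda b])=2[([D_\mu H]_\theta(a))_{\lambda+\theta}\alpha^{k+s}(b)]$, giving the centroid property $[([D_\mu H]_\theta(a))_{\lambda+\theta}\alpha^{k+s}(b)]=\tfrac12 D'''_\theta([a_\lambda b])$ and likewise on the other slot. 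I expect this combinatorial step, plus the careful invocation of surjectivity of $\alpha$ to realize every element of $\R$ as $\alpha^{k+s}(b)$ (exactly as in the proof of the preceding theorem), to be the only place requiring real care; the rest is a direct application of Lemma \ref{lemm5-1}(3) and the theorem just proved. Once $[D_\mu H]_\theta(a)\in\Z(\R)$ for all $a$, the assumption $\Z(\R)=0$ yields $[D_\mu H]=0$, completing the forward direction and hence the equivalence.
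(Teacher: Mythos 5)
Your reverse direction is fine, but the forward direction has a genuine gap, concentrated exactly at the step you yourself flag as ``the key technical point'': being simultaneously an $\alpha^{k+s}$-quasicentroid and an $\alpha^{k+s}$-quasiderivation does \emph{not} make $E:=[D_\mu H]_\theta$ an $\alpha^{k+s}$-centroid. Adding \eqref{5-4} to \eqref{5-2} only yields $[(E(a))_{\lambda+\theta}\alpha^{k+s}(b)]=\tfrac12 D'''_\theta([a_\lambda b])$ for the (unknown) quasiderivation witness $D'''$, whereas the centroid condition \eqref{5-3} requires the right-hand side to be $E([a_\lambda b])$ itself; nothing forces $\tfrac12 D'''=E$ on $[\R,\R]$. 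Moreover, even granting $E\in{\rm C}(\R)\cap\QC(\R)$, the preceding theorem only controls $[{\rm C}(\R)_\lambda\QC(\R)]$, i.e.\ \emph{further brackets} of $E$ against quasicentroids; it does not say that $E$ itself vanishes or takes values in $\Z(\R)$ (centroids need not map into the center). So neither of your two concluding routes actually reaches $[D_\mu H]=0$.

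What is missing is the one computation the argument must be built on: expanding $[([{D_1}_\mu D_2]_\theta(a))_{\lambda+\theta}\alpha^{k+s}(b)]$ via \eqref{anticom} and applying \eqref{5-4} to $D_1$ and $D_2$ \emph{separately} gives $[([{D_1}_\mu D_2]_\theta(a))_{\lambda+\theta}\alpha^{k+s}(b)]=-[\alpha^{k+s}(a)_\lambda([{D_1}_\mu D_2]_\theta(b))]$, i.e.\ the bracket of two quasicentroids is an \emph{anti}-quasicentroid (equivalently, the quasiderivation witness in Lemma \ref{lemm5-1}(3) is $0$). Comparing this with the assumed quasicentroid identity for $[{D_1}_\mu D_2]$, which produces the same expression with a $+$ sign, forces $[([{D_1}_\mu D_2]_\theta(a))_{\lambda+\theta}\alpha^{k+s}(b)]=0$ for all $b$; surjectivity of $\alpha$ then places $[{D_1}_\mu D_2]_\theta(a)$ in $\Z(\R)=0$. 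Note that once you know $D'''=0$, your own ``factor of $2$'' identity closes the argument immediately, with no detour through centroids or through the theorem on $[{\rm C}(\R)_\lambda\QC(\R)]$ --- but establishing $D'''=0$ is precisely the computation your proposal omits.
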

\begin{proof}
$(\Rightarrow)$  Assume that $\QC(\R)$ is a Hom-Lie conformal algebra. Since $\alpha$ is surjective,
for any $b^{'}\in \R$, there exists $b \in \R$ such that $b^{'}=\alpha^{k+s}(b)$. For ${D_1}_\mu\in\QC_{\alpha^k}(\R),{D_{2}}_{\mu}\in\QC_{\alpha^s}(\R)$, $[{D_{1}}_\mu D_{2}]\in\QC_{\alpha^{k+s}}(\R)[\mu]$. For $a\in \R$, by \eqref{5-4}, we have
\begin{eqnarray}\label{5-15}[([{D_{1}}_\mu D_{2}]_\theta(a))_{\lambda+\theta}b^{'}]=[([{D_{1}}_\mu D_{2}]_\theta(a))_{\lambda+\theta}\alpha^{k+s}(b)]
=[\alpha^{k+s}(a)_\lambda([{D_{1}}_\mu D_{2}]_\theta(b))].\end{eqnarray}
By \eqref{anticom} and \eqref{5-4}, we obtain
\begin{eqnarray}\label{5-16}
&&[([{D_{1}}_\mu D_2]_\theta(a))_{\lambda+\theta}\alpha^{k+s}(b)]\nonumber\\
&=&[({D_1}_\mu({D_2}_{\theta-\mu}(a)))_{\lambda+\theta}\alpha^{k+s}(b)]-[({D_2}_{\theta-\mu}({D_1}_\mu(a)))_{\lambda+\theta}\alpha^{k+s}(b)]\nonumber\\
&=&[\alpha^k({D_2}_{\theta-\mu}(a))_{\lambda+\theta-\mu}({D_1}_\mu(\alpha^s(b)))]
-[\alpha^s({D_1}_{\mu}(a))_{\lambda+\mu}({D_2}_{\theta-\mu}(\alpha^{k}(b)))]\nonumber\\
&=&[\alpha^{k+s}(a)_\lambda({D_2}_{\theta-\mu}({D_1}_\mu(b)))]-[\alpha^{k+s}(a)_\lambda({D_1}_\mu({D_2}_{\theta-\mu}(b)))]\nonumber\\
&=&-[\alpha^{k+s}(a)_\lambda([{D_{1}}_\mu D_{2}]_\theta(b))].
\end{eqnarray}
Combining \eqref{5-15} with \eqref{5-16} gives
$$[([{D_{1}}_\mu D_{2}]_\theta(a))_{\lambda+\theta}b^{'}]=[([{D_{1}}_\mu D_2]_\theta(a))_{\lambda+\theta}\alpha^{k+s}(b)]=0,$$
and thus $[{D_{1}}_\mu D_{2}]_\theta(a)\in\Z(\R)[\mu]=0$, since $\Z(\R)=0$.
Therefore, $[{D_{1}}_\mu D_{2}]=0$.

$(\Leftarrow)$ It is clear.
\end{proof}

Let $(\R,\alpha)$ be a multiplicative Hom-Lie conformal algebra and $t$ an indeterminate. Denote
\begin{eqnarray*}
\breve{\R}=\R[t\mathbb{C}(t)/(t^3)]=\{\sum(a\otimes t+b\otimes t^2)|a,b\in \R\}.
 \end{eqnarray*}
 Define on $\breve{\R}$
\begin{eqnarray*}
\breve{\alpha}(a\otimes t^i) =\alpha(a)\otimes t^i, \ \
\partial(a\otimes t^i)=\partial a\otimes t^i, \ i=1, 2.
 \end{eqnarray*}
Then $(\breve{\R},\breve{\alpha})$ is a Hom-Lie conformal algebra with the following $\lambda$-bracket
\begin{eqnarray}
[(a\otimes t^i)_\lambda(b\otimes t^j)]=[a_\lambda b]\otimes t^{i+j}, \ \ i,\,j=1,\, 2.
\end{eqnarray}
In the following, we shall simply write $xt^i$ for $a\otimes t^i$, $i=1,2$, and denote by $[\R,\R]$ as the $\mathbb{C}$-linear
span of all $\lambda$-coefficients in the products $[a_\lambda b]$, where $a, b \in\R$.
If $U$ is a subspace of $\R$ such that $\R=U\oplus[\R,\R]$, then
$\breve{\R}=\R t+\R t^2=\R t+[\R,\R]t^2+Ut^2.$

Define a map $\varphi:\QDer(\R)\rightarrow\Cend(\breve{\R})$ by
\begin{eqnarray}\label{5-20}
\varphi(D_\mu)(at+bt^2+ut^2)=D_\mu(a)t+D^{'}_\mu(b)t^2, \forall \ a\in\R, \ b\in [\R,\R], \ u\in U,
\end{eqnarray}
where $D_\mu,D^{'}_\mu$ satisfy \eqref{5-2}.

\begin{prop}
\begin{enumerate}
\item[$(1)$] $\varphi$ is injective and $\varphi(D_\mu)$ does not depend on the choice of $D^{'}_\mu$,
\item[$(2)$] $\varphi(\QDer(\R))\subseteq\Der(\breve{\R})$.
\end{enumerate}
\end{prop}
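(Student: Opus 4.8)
The plan is to prove the two claims separately, exploiting the defining relation \eqref{5-2} throughout. For part (1), I would first show that $\varphi(D_\mu)$ is well-defined, i.e. independent of the choice of $D^{'}_\mu$. Suppose $D^{'}_\mu$ and $D^{''}_\mu$ both satisfy \eqref{5-2} for the same $D_\mu$; subtracting the two instances of \eqref{5-2} yields $[\alpha^k(a)_\lambda((D^{'}_\mu - D^{''}_\mu)(b))] = (D^{'}_\mu - D^{''}_\mu)([a_\lambda b])$ together with a vanishing term, and more importantly we get $[\alpha^k(a)_\lambda((D^{'}_\mu-D^{''}_\mu)(b))] = 0$ for all $a,b$ when the contributions are compared; hence $D^{'}_\mu - D^{''}_\mu$ sends $\R$ into the $\lambda$-kernel, which forces $(D^{'}_\mu - D^{''}_\mu)(b) = 0$ whenever $b \in [\R,\R]$. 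Since in \eqref{5-20} the second summand only ever evaluates $D^{'}_\mu$ on elements $b \in [\R,\R]$, the value $\varphi(D_\mu)$ is unchanged. For injectivity, if $\varphi(D_\mu) = 0$ then $D_\mu(a)t = 0$ for all $a \in \R$, so $D_\mu = 0$; note also one should check $\varphi(D_\mu)$ is genuinely a conformal endomorphism of $\breve\R$, which is immediate from $\varphi(D_\mu)(\partial x t^i) = (\partial+\mu)\varphi(D_\mu)(x t^i)$ since $D_\mu, D^{'}_\mu \in \Omega$ and $\partial$ acts diagonally on the $t$-grading.

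For part (2), I would verify that $\varphi(D_\mu)$ is an $\breve\alpha$-derivation of $\breve\R$. The commutativity condition $\varphi(D_\mu)\circ\breve\alpha = \breve\alpha\circ\varphi(D_\mu)$ follows from $D_\mu\circ\alpha = \alpha\circ D_\mu$ and $D^{'}_\mu\circ\alpha = \alpha\circ D^{'}_\mu$ (both hold since $D_\mu, D^{'}_\mu \in \Omega$), combined with $\breve\alpha$ respecting the $t$-grading; one must only be mildly careful that $\alpha$ preserves $[\R,\R]$, which holds because $\alpha$ is multiplicative. The substantive point is the derivation rule
\[
\varphi(D_\mu)([(xt^i)_\lambda (yt^j)]) = [(\varphi(D_\mu)(xt^i))_{\lambda+\mu} \breve\alpha(yt^j)] + [\breve\alpha(xt^i)_\lambda (\varphi(D_\mu)(yt^j))].
\]
Since $[(xt^i)_\lambda(yt^j)] = [x_\lambda y]t^{i+j}$ and $i+j \in \{2,3,4\}$, while $t^3 = t^4 = 0$ in $\breve\R$, the only nontrivial case is $i = j = 1$, where $[x_\lambda y]t^2 \in [\R,\R]t^2$, so $\varphi(D_\mu)$ applied to it gives $D^{'}_\mu([x_\lambda y])t^2$. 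Expanding the right-hand side, $\breve\alpha(yt) = \alpha(y)t$ and $\varphi(D_\mu)(xt) = D_\mu(x)t$, so the RHS becomes $\big([(D_\mu(x))_{\lambda+\mu}\alpha^k(y)] + [\alpha^k(x)_\lambda(D_\mu(y))]\big)t^2$ — wait, one needs $\breve\alpha = \alpha^k$ here, which is wrong in general, so instead the right interpretation is that $\breve\R$ carries the $\alpha^k$-derivation structure meaning the RHS of the derivation rule uses $\breve\alpha^k$; expanding gives $\big([(D_\mu(x))_{\lambda+\mu}\alpha^k(y)] + [\alpha^k(x)_\lambda(D_\mu(y))]\big)t^2 = D^{'}_\mu([x_\lambda y])t^2$ by \eqref{5-2}, which matches the LHS exactly.

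The main obstacle I anticipate is bookkeeping rather than conceptual: one must carefully track which of $D_\mu$ and $D^{'}_\mu$ acts on the $t$-component versus the $t^2$-component, and confirm that the decomposition $\breve\R = \R t + [\R,\R]t^2 + Ut^2$ makes $\varphi(D_\mu)$ well-defined on all of $\breve\R$ (the $Ut^2$ summand is simply annihilated, which is consistent because $U \cap [\R,\R]$ need not be zero only after we have fixed the direct-sum decomposition $\R = U \oplus [\R,\R]$). The well-definedness argument in part (1) is where the real content lies: it is precisely the observation that two choices of $D^{'}_\mu$ differ by a map vanishing on $[\R,\R]$, so the potential ambiguity never materializes in \eqref{5-20}. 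Once that is in place, part (2) is a direct grading-degree computation using that $t^3 = 0$ kills all cross terms except $i = j = 1$.
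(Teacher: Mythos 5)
Your overall route is the same as the paper's: injectivity by comparing the coefficient of $t$, well-definedness by showing that two admissible choices of $D'_\mu$ agree on $[\R,\R]$, and part $(2)$ by observing that $t^{i+j}=0$ for $i+j\geq 3$ reduces everything to the case $i=j=1$, where \eqref{5-2} yields exactly the $\breve{\alpha}^k$-derivation identity \eqref{5-21}. Your momentary hesitation about whether the derivation rule should use $\breve{\alpha}$ or $\breve{\alpha}^k$ resolves correctly: the claim is $\varphi(D_\mu)\in\Der_{\alpha^k}(\breve{\R})$, so the rule does use $\breve{\alpha}^k$, and your final computation is the paper's.

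The one step that fails as written is your derivation of well-definedness. Subtracting the two instances of \eqref{5-2} does \emph{not} yield $[\alpha^k(a)_\lambda((D'_\mu-D''_\mu)(b))]=(D'_\mu-D''_\mu)([a_\lambda b])$: the left-hand side of \eqref{5-2} involves only $D_\mu$, which is fixed, so the subtraction gives directly $0=(D'_\mu-D''_\mu)([a_\lambda b])$ for all $a,b\in\R$. Your intermediate claim $[\alpha^k(a)_\lambda((D'_\mu-D''_\mu)(b))]=0$ is not a consequence of the hypotheses; and even if it were, passing from ``$[\alpha^k(a)_\lambda x]=0$ for all $a$'' to ``$x=0$'' would require $\Z(\R)=0$ together with surjectivity of $\alpha$, neither of which is assumed in this proposition. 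Fortunately the conclusion you are after --- that $D'_\mu$ and $D''_\mu$ agree on $[\R,\R]$, so that \eqref{5-20} is unambiguous --- follows immediately from the correct, simpler subtraction. One further small slip: since $U$ is chosen so that $\R=U\oplus[\R,\R]$, the intersection $U\cap[\R,\R]$ is zero by definition, so there is no residual ambiguity in the decomposition of $\breve{\R}$. With these repairs your proof coincides with the paper's.
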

\begin{proof}
(1) If $\varphi({D_1}_\mu)=\varphi({D_2}_\mu)$, then
$$\varphi({D_1}_\mu)(at+bt^2+ut^2)=\varphi({D_2}_\mu)(at+bt^2+ut^2), \ \forall \ a\in\R, \ b\in [\R,\R], \ u\in U,$$
that is $${D_1}_\mu(a)t+{D^{'}_1}_\mu(b)t^2={D_2}_\mu(a)t+{D^{'}_2}_\mu(b)t^2.$$ Thus ${D_1}_\mu(a)={D_2}_\mu(a)$, $\forall\ a\in\R$. Then ${D_1}_\mu={D_2}_\mu$ and thus $\varphi$ is injective.

If there exists another $D^{''}_\mu$ satisfying \eqref{5-20}.
Since both $D^{'}_\mu$ and $D^{''}_\mu$ satisfy \eqref{5-2}, we have $D^{'}_\mu([c_\lambda d])=D^{''}_\mu([c_\lambda d])$ for any $c,d\in\R $, namely, $D^{'}_\mu(b)=D^{''}_\mu(b)$, $\forall\ b\in[\R,\R]$.
Hence $$\varphi(D_\mu)(at+bt^2+ut^2)=D_\mu(a)t+D^{'}_\mu(b)t^2=D_\mu(a)t+D^{''}_\mu(b)t^2,$$
which implies $\varphi(D_\mu)$ does not depend on the choice of $D^{'}_\mu$.

(2) Note that $[(at^i)_\lambda(bt^j)]=[a_\lambda b]t^{i+j}=0$, for all $i+j\geq3$. For $D_\mu\in\QDer_{\alpha^k}(\R)$, we only need to show
\begin{eqnarray}\label{5-21}
\varphi(D_\mu)([(at)_\lambda (bt)])=[\varphi(D_\mu)(at)_{\lambda+\mu}\breve{\alpha}^k(bt)]+[\breve{\alpha}^k(at)_\lambda\varphi(D_\mu)], \ \forall\, a,b\in \R.
\end{eqnarray}
Indeed, we have
\begin{eqnarray*}
\varphi(D_\mu)([at_\lambda bt])
&=&\varphi(D_\mu)([a_\lambda b]t^2)=D^{'}_\mu([a_\lambda b])t^2\\
&=&([D_\mu(a)_{\lambda+\mu}\alpha^k(b)]+[\alpha^k(a)_\lambda D_\mu(b)])t^2\\
&=&[(D_\mu(a)t)_{\lambda+\mu}(\alpha^k(b)t)]+[(\alpha^k(a)t)_\lambda (D_\mu(b)t)]\\
&=&[\varphi(D)_\mu(at)_{\lambda+\mu}\breve{\alpha}^k(bt)]+[\breve{\alpha}^k(at)_\lambda(\varphi(D)_\mu(bt))],
\end{eqnarray*}
which proves \eqref{5-21} and thus $\varphi(D_\mu)\in\Der_{\alpha^k}(\breve{\R})$.
\end{proof}

\end{document}